\newcommand{\NP}{{\sf NP}}
\newtheorem{open}{Open Problem}
\begin{document}
\title{Complexity~Framework~For~Forbidden~Subgraphs I:\\ The Framework}
\author{Matthew Johnson\inst{1} \and Barnaby Martin\inst{1} \and Jelle J. Oostveen\inst{2} \and Sukanya Pandey\inst{2} \and\\ Dani\"el Paulusma\inst{1}  \and Siani Smith\inst{1} \and Erik Jan van Leeuwen\inst{2}}

\pagestyle{plain}

\authorrunning{M. Johnson et al.}

\institute{Department of Computer Science,
 Durham University, Durham, UK,
\email{\{matthew.johnson2,barnaby.d.martin,daniel.paulusma,siani.smith\}@durham.ac.uk}
\and
Department of Information and Computing Sciences, 
Utrecht University,\\
The Netherlands,
\email{\{j.j.oostveen,s.pandey1,e.j.vanleeuwen\}@uu.nl}
}

\maketitle

\begin{abstract}
For any particular class of graphs, algorithms for computational problems restricted to the class often rely on structural properties that depend on the specific problem at hand. This begs the question if a large set of such results can be explained by some common problem conditions. We propose such conditions for $\mathcal{H}$-subgraph-free graphs. For a  set of graphs $\mathcal{H}$, a graph $G$ is $\mathcal{H}$-subgraph-free if $G$ does not contain any of graph from ${\cal H}$ as a subgraph.  Our conditions are easy to state. A graph problem must be efficiently solvable on graphs of bounded treewidth, computationally hard on subcubic graphs, and computational hardness must be preserved under edge subdivision of subcubic graphs.  Our meta-classification says that if a graph problem satisfies all three conditions, then for every finite set ${\cal H}$, it is ``efficiently solvable'' on ${\cal H}$-subgraph-free graphs if $\mathcal{H}$ contains a disjoint union of one or more paths and subdivided claws, and is ``computationally hard'' otherwise.  We illustrate the broad applicability of our meta-classification by obtaining a dichotomy between polynomial-time solvability and \NP-completeness for many well-known partitioning, covering and packing problems, network design problems and width parameter problems. For other problems, we obtain a dichotomy between almost-linear-time solvability and having no subquadratic-time algorithm (conditioned on some hardness hypotheses).  The proposed framework thus gives a simple pathway to determine the complexity of graph problems on $\mathcal{H}$-subgraph-free graphs. This is confirmed even more by the fact that along the way, we uncover and resolve several open questions from the literature.

\keywords{forbidden subgraph; complexity dichotomy; treewidth}
\end{abstract}

\section{Introduction}\label{s-intro}

Algorithmic meta-theorems are general algorithmic results applying to a whole range of problems, rather than just a single problem alone~\cite{Kr11}.
An \emph{algorithmic meta-theorem} is a statement saying that all problems sharing some property or properties $P$, restricted to a class of inputs $I$, can be solved efficiently by a certain form of algorithm. 
Probably the most famous algorithmic meta-theorem is that of Courcelle~\cite{Co90}, which proves that every graph property expressible in monadic second-order logic is decidable in linear time if restricted to graphs of bounded {\it treewidth} (see Section~\ref{s-dicho} for a definition of treewidth). 
Another example is that of Seese~\cite{Se96}, which proves that every graph property expressible in first-order logic is decidable in linear time when restricted to graphs of bounded degree. A third example comes from Dawar et al.~\cite{DGKS06}, who proved that every first-order definable optimisation problem admits a polynomial-time approximation scheme on any class of graphs excluding at least one minor. 
There is a wealth of further algorithmic meta-theorems (see, for example, \cite{BFLPST16,DFHT05,FLS18}), many of which combine structural graph theory (e.g.\ from graph minors) with logic formulations or other broad problem properties (such as bidimensionality).

An extension of an algorithmic meta-theorem can produce a so-called \emph{algorithmic meta-classification}. This is a general statement saying that all problems that share some property or properties $P$ admit, over some classes of input restrictions $I$, a classification according to whether or not they have property $S$. If the input-restricted class has property $S$, then this problem is ``efficiently solvable''; otherwise it is ``computationally hard''. Throughout, we let these two notions depend on context; for example, efficiently solvable and computationally hard could mean being solvable in polynomial time and being \NP-complete, respectively. 

Algorithmic meta-classifications are less common than algorithmic meta-theorems, but 
let us mention two famous results. Grohe \cite{Gr07} proved that there is a polynomial-time algorithm for finite-domain constraint satisfaction problems whose left-hand input structure is restricted to $\mathcal{C}$ if and only if $\mathcal{C}$ has bounded treewidth (assuming $\mathrm{W}[1]\neq \mathrm{FPT}$). Bulatov~\cite{Bu17} and Zhuk~\cite{Zh20} proved that every finite-domain CSP$(H)$ is either polynomial-time solvable or \NP-complete, omitting any Ladner-like complexities in between.

Two well-known meta-classifications apply to the classes of ${\cal H}$-minor-free graphs and ${\cal H}$-topological-minor-free graphs. For a set ${\cal H}$ of graphs, these are the class of graphs $G$ where, starting from $G$, no graph $H \in {\cal H}$ can be obtained by a series of vertex deletions, edge deletions, and edge contractions, respectively a series of vertex deletions, edge deletions, and vertex dissolutions (see Section~\ref{s-prelims} for full definitions). 
Both are a consequence of a classic result of~\cite{RS86}; we refer to Appendix~\ref{a-planar} for proof details, but see also e.g.~\cite{KLMT11}.

\begin{theorem}\label{t-planar} 
Let $\Pi$ be a problem that is computationally hard on planar graphs, but efficiently solvable for every graph class of bounded treewidth. For any set of graphs~${\cal H}$, the problem~$\Pi$ on ${\cal H}$-minor-free graphs is efficiently solvable if ${\cal H}$ contains a planar graph (or equivalently, if the class of ${\cal H}$-minor-free graphs has bounded treewidth) and is computationally hard otherwise.
\end{theorem}

\begin{theorem}\label{t-topo} 
Let $\Pi$ be a problem that is computationally hard on planar subcubic graphs, but efficiently solvable for every graph class of bounded treewidth. For any set of graphs~${\cal H}$, the problem~$\Pi$ on ${\cal H}$-topological-minor-free graphs is efficiently solvable if ${\cal H}$ contains a planar subcubic graph (or equivalently, if the class of ${\cal H}$-topological-minor-free graphs has bounded treewidth) and is computationally hard otherwise.
\end{theorem}

Later in our paper, we will discuss many problems that satisfy the conditions of Theorems~\ref{t-planar} and~\ref{t-topo}. We refer, for example, to~\cite{FHL21,Mu17} for a number of problems that satisfy the conditions of Theorem~\ref{t-topo}, and thus also of Theorem~\ref{t-planar}, and that are \NP-complete even for planar subcubic graphs of high girth. 

On the other end of the spectrum lie ${\cal H}$-free graphs (or hereditary graphs). These are graphs $G$ where, starting from $G$, no graph $H \in {\cal H}$ can be obtained by a series of vertex deletions. These graph classes are much more complex in structure than ${\cal H}$-minor-free graphs and ${\cal H}$-topological-minor-free graphs and the family of hereditary graphs contains infinite descending chains of graph classes. This makes the task of finding algorithmic meta-theorems much harder. In fact, even algorithmic meta-theorems are unknown for the induced subgraph relation, even for a single forbidden graph~$H$. Indeed, even complexity dichotomies for $H$-free graphs are rare and only known for specific problems (see e.g.~\cite{BBJPPL21,GP14,Ka12,KP20}). Some attempts have been made to study complexity boundaries, e.g.~through the notion of boundary graph classes~\cite{Al03} (see also~\cite{ABKL07,KLMT11,Mu17}), but we are far from a broad understanding. Even a fundamental problem like {\sc Independent Set} is not known to be polynomial-time solvable on $H$-free graphs for many graphs~$H$ (see e.g.~\cite{GKPP22}).

Between ${\cal H}$-minor-free graphs and ${\cal H}$-topological-minor-free graphs on the one side and ${\cal H}$-free graphs on the other side, lies the class of ${\cal H}$-subgraph-free graphs. 
These are the graphs $G$ where, starting from $G$, no graph $H \in {\cal H}$ can be obtained by a series of vertex or edge deletions. 
In general, for any set ${\cal H}$ of graphs, 

\medskip
\noindent
 ${\cal H}$-minor-free graphs $\subseteq$ ${\cal H}$-topological-minor-free graphs $\subseteq$ ${\cal H}$-subgraph-free graphs $\subseteq$ ${\cal H}$-free graphs.

 \medskip
\noindent
Forbidden subgraphs represent many rich graph classes. 
For example, the classes of graphs of maximum degree at most~$r$ and $K_{1,r+1}$-subgraph-free graphs coincide; the class of graphs with girth larger than $g$ for some integer $g\geq 3$ coincides with the class of $(C_3,\ldots,C_g)$-subgraph-free graphs (and with the class of $(C_3,\ldots,C_g)$-free graphs);  
a class of graphs~${\cal G}$ has bounded treedepth if it is a subclass of $P_r$-subgraph-free graphs for some constant~$r$, and vice versa~\cite{NOM12}; 
and for every class~${\cal G}$ of degenerate or nowhere dense graphs~\cite{NO11}, there exists an integer $t$ such that every $G\in {\cal G}$ is $K_{t,t}$-subgraph-free (see~\cite{TV19} for a proof). 
Moreover, $H$-free graphs and $H$-subgraph-free graphs coincide if and only if $H = K_r$ for some integer $r \geq 1$.  This leads to a rich structural landscape.

Despite capturing natural graph classes, ${\cal H}$-subgraph-free graphs have been significantly less studied in the context of algorithms than the other classes. Yet, they exhibit extreme and unexpected jumps in problem complexity. For example, there exist problems that are PSPACE-complete in general but constant-time solvable for every ${\cal H}$-free graph class~\cite{MPS22} and thus for every ${\cal H}$-subgraph-free graph class, where ${\cal H}$ is any (possibly infinite) nonempty set of graphs. 
Another example is the {\sc Clique} problem, which is well-known to be \NP-hard (see~\cite{GJ79}). However, {\sc Clique} is polynomial-time solvable for all classes of ${\cal H}$-subgraph-free graphs. The reason is that the size of a largest clique is bounded by the number of vertices of a smallest graph in~${\cal H}$ and hence, one can apply brute force to find a largest clique in an ${\cal H}$-subgraph-free graph.

In contrast to ${\cal H}$-free graphs, however, some work has pointed to more complex dichotomy results being possible. Kami\'nski~\cite{Ka12} gave a complexity dichotomy for {\sc Max-Cut} restricted to ${\cal H}$-subgraph-free graphs, where ${\cal H}$ is any finite set of graphs.
Twenty years earlier, Alekseev and Korobitsyn~\cite{AK92} did the same for {\sc Independent Set}, {\sc Dominating Set} and {\sc Long Path}; see~\cite{GPR15} for a short, alternative proof (similar to the one of~\cite{Ka12} for {\sc Max-Cut}) for the classification for {\sc Independent Set} for $H$-subgraph-free graphs. In~\cite{GP14} the computational complexity of {\sc List Colouring} for ${\cal H}$-subgraph-free graphs has been determined for every finite set of graphs~${\cal H}$. 
More recently, Bodlaender et al.~\cite{BHKKOO20} determined the computational complexity of {\sc Subgraph Isomorphism} for $H$-subgraph-free graphs for all connected graphs~$H$ except the case where $H=P_5$, and they reduced all open ``disconnected'' cases to either $H = P_5$ or $H=2P_5$. 
However, even for a classical problem such as {\sc Colouring}, a complete complexity classification for $H$-subgraph-free graphs is far from settled~\cite{GPR15}. Many problems have not been studied in this context at all. 

Motivated by our apparent lack of understanding of ${\cal H}$-subgraph-free graphs, this paper embarks on a deeper investigation of the complexity of graph problems on these classes. 

\subsection{Our Results}
We pioneer an algorithmic meta-classification of ${\cal H}$-subgraph-free graphs. Before we define our framework, we first give some terminology. A class of graphs has bounded treewidth 
if there is a constant~$c$ such that every graph in it has treewidth at most~$c$. Recall that a graph is subcubic if every vertex has degree at most~$3$.
For an integer $k\geq 1$, the {\it $k$-subdivision} of an edge $e=uv$ of a graph replaces $e$ by a path of length $k+1$ with endpoints $u$ and $v$ (and $k$ new vertices). 
The {\it $k$-subdivision} of a graph~$G$ is the graph obtained from $G$ after $k$-subdividing each edge. 
For a graph class ${\cal G}$ and an integer~$k$, let ${\cal G}^k$ consist of the $k$-subdivisions of the graphs in ${\cal G}$.

A graph problem $\Pi$ is computationally hard
{\it under edge subdivision of subcubic graphs} if there is an integer~$k\geq 1$ such that:
if $\Pi$ is computationally hard for the class ${\cal G}$ of subcubic graphs, then $\Pi$ is computationally hard for ${\cal G}^{kp}$ for every $p\geq 1$; see Section~\ref{s-c3} for some reasons why we cannot simplify this definition.

\begin{figure}[b]
\begin{minipage}[c]{0.3\textwidth}
\hspace*{1cm}
\begin{tikzpicture}[scale=0.8]
\draw (0,1)--(-1,1)--(-2,0)--(-1,-1)--(2,-1) (-2,0)--(1,0); \draw[fill=black] (-1,1) circle [radius=2pt] (0,1) circle [radius=2pt] (-2,0) circle [radius=2pt] (-1,0) circle [radius=2pt] (0,0) circle [radius=2pt] (1,0) circle [radius=2pt] (-1,-1) circle [radius=2pt] (0,-1) circle [radius=2pt] (1,-1) circle [radius=2pt] (2,-1) circle [radius=2pt];
\end{tikzpicture}
\end{minipage}
\qquad
\begin{minipage}[c]{0.2\textwidth}
\begin{tikzpicture}[scale=0.8] 
\draw (-1,1)--(0,1) 
(-1,-1)--(2,-1) (-1,0)--(1,0); \draw[fill=black] (-1,1) circle [radius=2pt] (0,1) circle [radius=2pt]  (-1,0) circle [radius=2pt] (0,0) circle [radius=2pt] (1,0) circle [radius=2pt] (-1,-1) circle [radius=2pt] (0,-1) circle [radius=2pt] (1,-1) circle [radius=2pt] (2,-1) circle [radius=2pt];
\end{tikzpicture}
\end{minipage}
\hspace{1.2cm}
\begin{minipage}[c]{0.4\textwidth}
\includegraphics[scale=0.3]{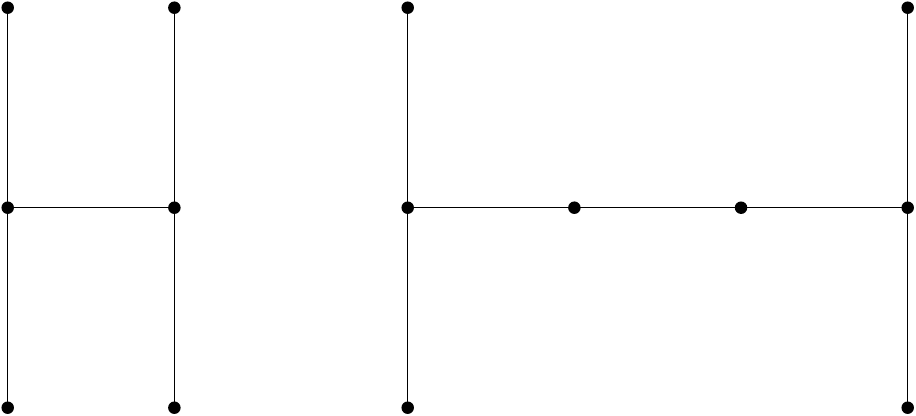}
\end{minipage}
\caption{Left: An example of a graph in ${\cal S}$ (the graph $S_{2,3,4}+P_2+P_3+P_4)$. Right: the graphs $\mathbb{H}_1$ and $\mathbb{H}_3$, where
$\mathbb{H}_1$ is the ``H''-graph, formed by an edge (the {\it middle edge}) joining the middle vertices of two $P_3$s, and $\mathbb{H}_i$ ($i\geq 2$) is obtained from $\mathbb{H}_1$ by $(i-1)$-subdividing the middle edge.} \label{f-st}
\end{figure}

Our framework contains every graph problem~$\Pi$  satisfying the following three conditions:

\begin{enumerate}
\item [{\bf C1.}] $\Pi$ is efficiently solvable for every graph class of bounded treewidth;
\item [{\bf C2.}] $\Pi$ is computationally hard 
 for the class of subcubic graphs; and
\item [{\bf C3.}] $\Pi$ is computationally hard under edge subdivision 
of subcubic graphs.
\end{enumerate}

\noindent
A problem $\Pi$ that satisfies conditions C1--C3 is called a {\it C123-problem}. 

For some $p,q,r\geq 1$, the {\it subdivided} claw $S_{p,q,r}$ is obtained from the {\it claw} (the $4$-vertex star $K_{1,3}$) after $(p-1)$-, $(q-1)$-, and $(r-1)$-subdividing its three edges respectively. 
The {\it disjoint union} $G_1+G_2$ of two vertex-disjoint graphs $G_1$ and $G_2$ is the graph $(V(G_1)\cup V(G_2),E(G_1)\cup E(G_2))$.  We now define the set ${\cal S}$, which plays an important role in our paper; see the left side of Fig.~\ref{f-st} for an example of a graph that belongs to ${\cal S}$.

\begin{definition}
The set ${\cal S}$ consists of all non-empty disjoint unions of a set of zero or more subdivided claws and paths.
\end{definition}

\noindent
Our main result is the following theorem that can be seen as the ``subgraph variant'' of Theorems~\ref{t-planar} and~\ref{t-topo}.
Note that it suggests, just like Theorems~\ref{t-planar} and~\ref{t-topo}, that boundedness of treewidth might be the underlying explanation for the polynomial-time solvability.

\begin{theorem}\label{t-dicho}
Let $\Pi$ be a C123-problem.
For any finite set of graphs ${\cal H}$, the problem $\Pi$ on ${\cal H}$-subgraph-free graphs is efficiently solvable if ${\cal H}$ contains a graph from ${\cal S}$ 
 (or equivalently, if the class of ${\cal H}$-subgraph-free graphs has bounded treewidth)
and computationally hard otherwise.
\end{theorem}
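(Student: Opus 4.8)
The plan is to prove the two directions of the dichotomy separately; each is a short reduction, one to C1 and one to C2 together with C3.

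\textbf{The efficiently-solvable direction.}
Suppose ${\cal H}$ contains a graph $H\in{\cal S}$. Every ${\cal H}$-subgraph-free graph is in particular $H$-subgraph-free, so it suffices to solve $\Pi$ efficiently on $H$-subgraph-free graphs. The one structural ingredient here is the fact that, for every $H\in{\cal S}$, the class of $H$-subgraph-free graphs has bounded treewidth. I would either cite this or sketch it as follows: writing $H$ as a disjoint union of paths and subdivided claws, one reduces to the connected case (an $(H_1+H_2)$-subgraph-free graph becomes $H_1$-subgraph-free or $H_2$-subgraph-free after deleting a bounded set of vertices), and then uses that $P_\ell$-subgraph-free graphs have bounded tree-depth, hence bounded treewidth, and that $S_{\ell,\ell,\ell}$-subgraph-free graphs have bounded treewidth. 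Given this, C1 says $\Pi$ is efficiently solvable on every class of bounded treewidth, so $\Pi$ is efficiently solvable on $H$-subgraph-free graphs and hence on ${\cal H}$-subgraph-free graphs.

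\textbf{The computationally-hard direction: set-up.}
Now assume ${\cal H}$ contains no graph from ${\cal S}$ (if ${\cal H}=\emptyset$ the statement is immediate from C2, so assume ${\cal H}\neq\emptyset$). Let ${\cal G}$ be the class of subcubic graphs. By C2, $\Pi$ is computationally hard on ${\cal G}$, and by C3 there is a $k\ge 1$ such that $\Pi$ is computationally hard on ${\cal G}^{kp}$ for every $p\ge 1$. Put $N=\max_{H\in{\cal H}}|V(H)|$, which is finite since ${\cal H}$ is finite, and fix $p$ with $kp\ge N$. The crux is the following claim: \emph{every $kp$-subdivision of a subcubic graph is ${\cal H}$-subgraph-free}. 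Granting it, ${\cal G}^{kp}$ is a subclass of the class of ${\cal H}$-subgraph-free graphs, and since computational hardness (in any of the senses used in the paper) is inherited by superclasses of inputs, $\Pi$ is computationally hard on ${\cal H}$-subgraph-free graphs.

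\textbf{Proof of the claim.}
Let $G$ be subcubic and $G'=G^{kp}$; note $kp\ge 1$. Then $G'$ has three properties: (i) $G'$ is subcubic, since subdividing an edge raises no vertex degree and each new vertex has degree $2$; (ii) every cycle of $G'$ comes from a cycle of $G$ with each edge replaced by a path of length $kp+1$, so if $G'$ has any cycle its girth is at least $3(kp+1)$; and (iii) any path of $G'$ between two distinct vertices of $G$ decomposes into one or more entire subdivided edges, each of length $kp+1$, and so has length at least $kp+1$. Suppose, for contradiction, that some $H\in{\cal H}$ is a subgraph of $G'$. If $H$ has a vertex of degree at least $4$, this contradicts (i). If $H$ has a cycle, then $G'$ has a cycle of length at most $|V(H)|\le N<3(kp+1)$, contradicting (ii). Otherwise $H$ is a forest of maximum degree at most $3$; since $H\notin{\cal S}$, some component $T$ of $H$ is neither a path nor a subdivided claw, and, $T$ being a tree of maximum degree at most $3$, this forces $T$ to contain two distinct vertices $x,y$ of degree exactly $3$. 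As $H\subseteq G'$, both $x$ and $y$ have degree at least $3$ in $G'$, hence degree exactly $3$ by (i), hence belong to $G$; but the $x$--$y$ path of $T$ is a path of $G'$ of length $\dist_T(x,y)\le|V(T)|-1\le N-1<kp+1$, contradicting (iii). This proves the claim.

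\textbf{The main obstacle.}
The hard direction is essentially a single-parameter subdivision trick, and its three cases are forced: a graph fails to lie in ${\cal S}$ precisely because it has a vertex of degree at least $4$, or a cycle, or is a forest with a component that branches at least twice, and sufficiently long subdivisions of subcubic graphs avoid all three. The real content is the structural fact invoked in the easy direction --- that forbidding a graph of ${\cal S}$ as a subgraph already bounds treewidth --- together with, in applications, the separate work of verifying C1--C3 for each concrete problem; once those are available, the theorem itself is immediate.
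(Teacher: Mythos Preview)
Your proof is correct and follows essentially the same route as the paper: the hard direction is identical (the paper sets $\ell=\max\{\ell_1,\ell_2\}$ from the largest cycle length and the largest distance between two degree-$3$ vertices in a component of some $H\in{\cal H}$, but your uniform $N=\max_{H\in{\cal H}}|V(H)|$ works just as well for the same three-case analysis). For the easy direction the paper takes a slightly cleaner shortcut than your component-by-component sketch: it observes that for $H\in{\cal S}$ an $H$-subgraph-free graph is already $H$-minor-free, and then applies the Bienstock--Robertson--Seymour--Thomas bound (Theorem~\ref{t-bi}) to get bounded path-width (hence bounded treewidth) in one step.
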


\noindent
As mentioned, the notions of efficiently solvable and computational hardness depend on context. 
The proof of the ``efficient'' part of Theorem~\ref{t-dicho} (in Section~\ref{s-dicho}) uses a well-known path-width result~\cite{BRST91}. To prove the ``hard'' part, we show that every problem satisfying C2 and C3 is hard for $(C_3,\ldots,C_i,K_{1,4},\mathbb{H}_1,\ldots,\mathbb{H}_i)$-subgraph-free graphs for every integer $i\geq 3$ (see the right side of Figure~\ref{f-st} for some examples of a subdivided ``H''-graph $\mathbb{H}_i$). As such, it is similar to the proofs for {\sc Max-Cut}~\cite{Ka12} and {\sc List Colouring}~\cite{GP14} and {\sc Independent Set} for $H$-subgraph-free graphs~\cite{GPR15} for finite~${\cal H}$) (the original proofs from~\cite{AK92} for {\sc Independent Set}, {\sc Dominating Set} and {\sc Long Path}, restricted to ${\cal H}$-free graphs for finite ${\cal H}$, are different and do not involve any direct path-width arguments).

\medskip
\noindent
{\bf Impact}
The impact of this framework is three-fold. First and foremost, we are able to provide a complete dichotomy for many problems on ${\cal H}$-subgraph-free graphs by showing they are C123-problems. In this way, we obtain a dichotomy between polynomial-time solvability and \NP-completeness for many well-known partitioning, covering and packing problems, network design problems and width parameter problems. We first show that computing the path-width and treewidth of a graph are C123-problems. We do the same for a number of covering and packing problems: {\sc (Independent) Odd Cycle Transversal}, {\sc $P_3$-Factor} and two variants of the {\sc Dominating Set} problem, namely {\sc Independent Dominating Set} and {\sc Edge Dominating Set}; the latter is polynomially equivalent to {\sc Minimum Maximal Matching}~\cite{Ha91}. We also show that {\sc Independent Set} (or equivalently, {\sc Vertex Cover}) and {\sc Dominating Set} are C123, and thus we recover the known classifications of~\cite{AK92}. We also show that {\sc List Colouring} is C123, and thus we re-obtain the classification of~\cite{GP14}. Next we prove (still in Section~\ref{s-npc}) that the following network design problems are all C123-problems: {\sc Edge/Node Steiner Tree}, {\sc (Induced) Disjoint Paths}, {\sc Long (Induced) Cycle}, {\sc Long (Induced) Path}, {\sc Max-Cut} and {\sc Edge/Node Multiway Cut}. Hence, we also recover the classification of~\cite{AK92} for {\sc Long Path} restricted to ${\cal H}$-subgraph-free graphs. We also include a reference to a subsequent result of~\cite{FLPR}, in which it was shown that {\sc Perfect Matching Cut} is C123.

We then consider the polynomial-time solvable problems {\sc Diameter} and {\sc Radius}, which are studied in fine-grained complexity. Here, Theorem~\ref{t-dicho} gives a distinction between almost-linear-time solvability versus not having a subquadratic-time algorithm under the Orthogonal Vectors Conjecture~\cite{Wi05} and Hitting Set Conjecture~\cite{AWW16}, respectively. The Orthogonal Vectors conjecture is implied by SETH~\cite{Wi05} and by the Hitting Set Conjecture~\cite{AWW16}; we refer to~\cite{Wi18} for more context on both conjectures.

These applications of Theorem~\ref{t-dicho}, as well as a number of applications of Theorem~\ref{t-planar} and~\ref{t-topo}, are summarized in Table~\ref{t-thetable}. A detailed comparison is deferred to Section~\ref{s-compare}.

\begin{table}[!tb]
\begin{center}
\hspace*{-5mm}
\setlength{\tabcolsep}{6pt}
\resizebox{!}{8cm}{
\begin{tabular}{llll:lllll}
\toprule
\textbf{graph problem} & \textbf{MF}  & \textbf{\mbox{TMF}} & \textbf{SF}\ \  & \textbf{plan.} & \textbf{\mbox{subc.~plan.}} & \textbf{C1} & \textbf{C2} & \textbf{C3}\\
\midrule
{\sc Path-Width} &yes  & yes & yes &  \cite{MS88} &  \cite{MS88} & \cite{BK96} & \cite{MS88} & T\ref{t-pwpwpw} \\ 
{\sc Tree-Width} &? &? &yes& ? & ? & \cite{Bo96} & \cite{BBJKLMOPS23} & T\ref{t-twtwtw} \\
{\sc Dominating Set}&yes  &yes &yes& \cite{GJ79} & \cite{GJ79} & \cite{AP89} & \cite{GJ79} & \cite{CC07} \\
{\sc Independent Dominating Set}&yes  &yes  &yes & \cite{CCJ90} & \cite{CCJ90} & \cite{TP97} & \cite{CCJ90} & \cite{CC07}\\
{\sc Edge Dominating Set}&yes  & yes & yes & \cite{YG80} & \cite{YG80} & \cite{ALS91} & \cite{YG80} & \cite{CC07}\\
{\sc Independent Set}&yes &yes & yes &  \cite{Mo01}  &  \cite{Mo01}  & \cite{AP89} & \cite{Mo01} & \cite{Po74}\\
{\sc Vertex Cover}&yes & yes &yes&  \cite{Mo01} &  \cite{Mo01} & \cite{AP89} & \cite{Mo01} & \cite{Po74}\\
{\sc Connected Vertex Cover}&yes  & no & no & \cite{GJ77} & no & \cite{ALS91} & no & triv \\
{\sc Feedback Vertex Set} &yes  & no & no &  \cite{Sp83} & no & \cite{ALS91} & no & triv \\
{\sc Independent Feedback Vertex Set} &yes  & no & no & \cite{Sp83} & no & \cite{TIZ15} & no & triv \\
{\sc Odd Cycle Transversal}&yes  & yes  &yes & T\ref{t-odd} & T\ref{t-odd} &\cite{ALS91} & T\ref{t-odd} & T\ref{t-odd}\\
{\sc Independent Odd Cycle Transv.}&yes &yes & yes& T\ref{t-odd}& T\ref{t-odd} & \cite{ALS91} & T\ref{t-odd} & T\ref{t-odd}\\
{\sc $C_5$-Colouring} &yes & yes & no& \cite{MS09} & T\ref{thm:C5-cubcubic-planar} & \cite{DP89} & \cite{GHN00} & no \\
{\sc $3$-Colouring} &yes  & no & no & \cite{Go08} & no & \cite{AP89} & no & triv \\
{\sc Star $3$-Colouring} &yes  & yes & no & \cite{ACKKR04} & \cite{BJMOPS20} & \cite{Co90} & \cite{BJMOPS20} & no \\
{\sc List Colouring}&yes  &yes  &yes & \cite{JS97} & T\ref{t-odd} & T\ref{t-odd} & T\ref{t-odd} & T\ref{t-odd} \\
{\sc $P_3$-Factor}&yes  &yes &yes &  \cite{XL21} &  \cite{XL21} & \cite{ABKL07} & \cite{ABKL07} & \cite{ABKL07}\\
{\sc Edge Steiner Tree}&yes &yes & yes & \cite{GJ77}    & T\ref{EST:C123} & \cite{ALS91}  & T\ref{EST:C123} & T\ref{EST:C123} \\
{\sc Node Steiner Tree}&yes &yes & yes & \cite{GJ77}   &  T\ref{EST:C123} & \cite{ALS91} & T\ref{EST:C123} & T\ref{EST:C123} \\
{\sc Steiner Forest}&no &no & no & \cite{GJ77}   &  T\ref{EST:C123} & no & T\ref{EST:C123} & T\ref{EST:C123} \\
{\sc Disjoint Paths}&yes  &yes & yes& \cite{MP93} & \cite{MP93} & \cite{Sc94} & \cite{MP93} & T\ref{t-dp-idp} \\			
{\sc Induced Disjoint Paths}&yes  &yes & yes & T\ref{t-dp-idp} & T\ref{t-dp-idp} &\cite{Sc94} & T\ref{t-dp-idp} & T\ref{t-dp-idp} \\
{\sc Long Cycle}&yes  &yes & yes& \cite{GJS74} & \cite{GJS74} & \cite{Bo93} & \cite{GJS74} & T\ref{t-lp-lip-lcl-lic}\\
{\sc Long Induced Cycle}&yes &yes &yes & \cite{GJS74} & \cite{GJS74} & \cite{JKT20} & T\ref{t-lp-lip-lcl-lic} & T\ref{t-lp-lip-lcl-lic}\\
{\sc Hamilton Cycle} &yes&yes & no& \cite{GJS74} & \cite{GJS74} & \cite{AP89} & \cite{GJT76} & no \\
{\sc Long Path}&yes   &yes& yes & \cite{GJS74} & \cite{GJS74} & \cite{Bo93} & \cite{GJS74} & T\ref{t-lp-lip-lcl-lic}\\
{\sc Long Induced Path}&yes&yes&yes & T\ref{t-lp-lip-lcl-lic} & T\ref{t-lp-lip-lcl-lic} &  \cite{JKT20} & T\ref{t-lp-lip-lcl-lic} & T\ref{t-lp-lip-lcl-lic}\\
{\sc Hamilton Path} &yes &yes  &no& \cite{GJS74} & \cite{GJS74} & \cite{AP89} & \cite{GJT76} & no \\
{\sc Max-Cut}& no&no&yes& no & no & \cite{ALS91} & \cite{Ya78} & \cite{Ka12} \\
{\sc Edge Multiway Cut}&yes &yes &yes & \cite{JMPPSvL22} & \cite{JMPPSvL22} & \cite{DLZ13} & \cite{JMPPSvL22} & T\ref{t-mwc} \\
{\sc Node Multiway Cut}&yes&yes&yes & \cite{JMPPSvL22} & \cite{JMPPSvL22} & \cite{ALS91} & \cite{JMPPSvL22} & T\ref{t-mwc} \\
{\sc Matching Cut} &yes &no &no & \cite{Bo09} & no & \cite{Bo09} & no & triv \\
{\sc Perfect Matching Cut} &yes &yes  &yes & \cite{BCD23} & \cite{BCD23} & \cite{LT21} & \cite{LT21} & \cite{FLPR} \\
{\sc Diameter} &no$^*$ &no$^*$  &yes& no & no & \cite{AWW16} & \cite{ED16} & L\ref{DiameterSubdivision}\\
{\sc Radius} &no$^*$ &no$^*$ &yes & no & no & \cite{AWW16} & \cite{ED16} & T\ref{t-d-r}\\
{\sc Subgraph Isomorphism} &no &no &no & \cite{BHKKOO20} & \cite{BHKKOO20} & no & \cite{BHKKOO20} & triv \\
{\sc Clique} &no&no&no& no & no & triv & no & triv\\
  \end{tabular}
}
\vspace*{3.5mm}
\caption{The minor framework (MF), topological minor framework (TMF) and subgraph framework (SF), with the conditions. If a problem satisfies the conditions of a meta-classification, we indicate this with ``yes''; if not, with a ``no''; and if unknown with a ``?''. 
A reference in a column is a references for a ``yes''-statement and ``triv'' means that the statement is trivial.
The problems are mainly chosen to illustrate the wide reach of the frameworks and their differences. 
A further discussion of the table, explaining the ``no''-statements, is in Section~\ref{s-compare}. 
$^*$For {\sc Diameter} and {\sc Radius}, the complexity on (subcubic) planar graphs is still open; however, a distinction almost-linear versus not subquadratic 
(as in Theorem~\ref{t-d-r})
 is not possible~\cite{GawrychowskiKMS21}.
}
\label{t-thetable}
\end{center}
\vspace*{-1cm}
\end{table}

The second impact of our framework is that we uncover several open questions in the literature. We subsequently resolve them in this work. An important and difficult open question turned out to be the complexity of {\sc Edge/Node Multiway Cut}, for which the classic results of Dahlhaus et al.~\cite{DJPSY94} shows \NP-completeness of the unweighted variant for planar graphs of maximum degree~$11$ (and claims an improved bound of~$6$). 
In a companion paper~\cite{JMPPSvL22}, we show \NP-completeness for {\sc Edge/Node Multiway Cut} on planar subcubic graphs, besting the earlier degree bound and showing these problems satisfy C2.
Moreover, we prove, as new results, that {\sc List Colouring}, {\sc Odd Cycle Transversal}, {\sc Independent Odd Cycle Transversal}, and {\sc $C_5$-Colouring} are \NP-complete for planar subcubic graphs, and thus satisfy C2. For the following problems we give explicit proofs to show that they satisfy C3: {\sc Path-Width}, {\sc Tree-Width}, {\sc Edge/Node Steiner Tree}, {\sc Edge/Node Multiway Cut} and {\sc Diameter}. Hence, our framework on ${\cal H}$-subgraph-free graphs shows the way towards new results.

The third impact of our framework is that it enables a structured investigation into complexity dichotomies for graph problems that do not satisfy some of the conditions, C1, C2 or~C3, particularly when only one is not satisfied. We call such problems C23, C13, or C12, respectively. This direction leads to many interesting new research questions. We are currently trying to determine new complexity classifications for a number of relevant problems in separate works, see e.g.~\cite{MPPSV,JMPPSV,BJMOPPSV}. This led to new insights into the complexity of well-studied problems such as {\sc Hamilton Cycle}~\cite{MPPSV}, {\sc Feedback Vertex Set}~\cite{JMPPSV}, and {\sc Steiner Forest}~\cite{BJMOPPSV}. For all these problems, the complexity classifications are different from the one in Theorem~\ref{t-dicho}. Hence, our framework has the potential to open a new and rich research area.

\subsection{Organization}
We start with some preliminaries in Section~\ref{s-prelims}. In Section~\ref{s-dicho} we prove Theorem~\ref{t-dicho}  
and consider several graph-theoretic consequences. In Section~\ref{s-npc} and~\ref{s-poly}, we then apply our framework to a wealth of problems. A discussion on limitations of the framework is provided in Section~\ref{s-limits}. We provide an extensive comparison of the applicability of the three frameworks (Theorem~\ref{t-planar}, \ref{t-topo}, and~\ref{t-dicho}) in Section~\ref{s-compare}. Finally, we conclude in Section~\ref{s-con} with a list of open problems and research directions. 

\section{Preliminaries} \label{s-prelims}

A graph $G$ contains a graph $H$ as a {\it subgraph} if $G$ can be modified to $H$ by a sequence of vertex deletions and edge deletions; if not, then $G$ is {\it $H$-subgraph-free}.
A graph $G$ contains $H$ as an {\it induced subgraph} if $G$ can be modified to $H$ by a sequence of only vertex deletions; if not, then $G$ is {\it $H$-free}. 
The {\it girth} of a graph $G$ that is not a forest is the length (number of edges) of a shortest cycle in $G$.

The {\it contraction} of an edge $e=uv$ in a graph replaces $u$ and $v$ by a new vertex that is made adjacent precisely to the former neighbours of $u$ and $v$ in $G$ (without creating multiple edges). If $v$ had degree~$2$ and its two neighbours in $G$ are non-adjacent, then we also say that we {\it dissolved} $v$ and call the operation the \emph{vertex dissolution} of $v$.
A graph $G$ contains $H$ as a {\it topological minor} (or as a {\it subdivision}) if $G$ can be modified to $H$ by a sequence of vertex deletions, vertex dissolutions and edge deletions; if not, then $G$ is {\it $H$-topological-minor-free}. A graph $G$ contains $H$ as a {\it minor} if $G$ can be modified to $H$ by a sequence of vertex deletions, edge deletions and edge contractions; if not, then $G$ is {\it $H$-minor-free}. 

For a set ${\cal H}$ of graphs, a graph $G$ is  {\it ${\cal H}$-subgraph-free} if $G$ is $H$-subgraph-free for every $H\in {\cal H}$. 
If ${\cal H}=\{H_1,\ldots,H_p\}$ for some integer $p\geq 1$, we also say that $G$ is {\it $(H_1,\ldots,H_p)$-subgraph-free}.
We also define the analogous notions of being ${\cal H}$-free, ${\cal H}$-topological-minor-free and ${\cal H}$-minor-free. A class of ${\cal H}$-free graphs is also said to be {\it hereditary}.

A \emph{tree decomposition} of a graph~$G=(V,E)$ is a pair~$(T,{\cal X})$ where $T$ is a tree and ${\cal X}$ is a collection of subsets of $V$ called {\it bags} such that the following holds. A vertex $i\in T$ is  a {\it node} and corresponds to exactly one bag $X_i\in {\cal X}$. The tree $T$ has the following two properties. First, for each~$v\in V$, the nodes of $T$ that contain~$v$ induce a non-empty connected subgraph of $T$. Second, for each edge~$vw\in E$, there is at least one node of~$T$ that contains both~$v$ and~$w$. The {\it width} of $(T,{\cal X})$ is one less than the size of the largest bag in ${\cal X}$. The {\it treewidth} of~$G$ is the minimum width of its tree decompositions. If we require $T$ to be a path, then we obtain the notions {\it path decomposition} and {\it path-width}.

A graph parameter $p$ {\it dominates} a parameter~$q$ if there is a function~$f$ such that $p(G)\leq f(q(G))$ for every graph~$G$. If $p$ dominates~$q$, but $q$ does not dominate $p$, then $p$ is {\it more powerful} than~$q$. If $p$ dominates~$q$ and vice versa, then we say that $p$ and $q$ are {\it equivalent}.
Note that every graph of path-width at most~$c$ has treewidth at most~$c$. However, the class of trees has treewidth~$1$, but unbounded path-width (see~\cite{Di05}). 
Hence, treewidth is more powerful than path-width.

\section{The Proof of Theorem~\ref{t-dicho}}\label{s-dicho}

We present a stronger result that will imply Theorem~\ref{t-dicho}. A graph class closed under edge deletion is also called {\it monotone}~\cite{ABKL07,BL02,KLMT11}. For a set of graphs~${\cal H}$,  the class of ${\cal H}$-subgraph-free graphs is {\it finitely defined} if ${\cal H}$ is a finite set. We say that a problem $\Pi$ is C1$'$D if $\Pi$ satisfies the following two conditions (see Fig.~\ref{f-st} for examples of the subdivided ``H''-graphs $\mathbb{H}_i$):

\begin{itemize}
\item [{\bf C1$'$}.] $\Pi$ is efficiently solvable for every finitely defined monotone graph class of bounded path-width;
\item [{\bf D}.] For every $i\geq 3$, $\Pi$ is computationally hard for the class of $(C_3,\ldots,C_i,K_{1,4}, \mathbb{H}_1,\ldots, \mathbb{H}_i)$-subgraph-free graphs.
\end{itemize}

\noindent
Our first theorem shows that the class of C1$'$D-problems is a proper superclass of the class of C123-problems.

\begin{theorem}\label{t-cd}
Every C123-problem is C1$'$D, but not every C1$'$D-problem is C123.
\end{theorem}

\begin{proof}
Let $\Pi$ be a C123-problem. Then $\Pi$ satisfies C1 and thus C1$'$. 
To show condition~D, let $i\geq 3$, and let ${\cal G}_i$ be the class of $(C_3,\ldots,C_i,K_{1,4}, \mathbb{H}_1,\ldots, \mathbb{H}_i)$-subgraph-free graphs. As $\Pi$ satisfies C2, $\Pi$ is computationally hard for the class~${\cal G}$ of subcubic graphs, that is, $K_{1,4}$-subgraph-free graphs. As $\Pi$ satisfies C3, there exists an integer~$k$, such that $\Pi$  is computationally hard for ${\cal G}^{kp}$ for every $p\geq 1$. We now choose $p$ to be sufficiently large, say $p=i+1$, such that ${\cal G}^{kp}$ is a subclass of ${\cal G}_i$. To show that the reverse statement does not necessarily hold, we define the following (artificial) example problem.

Let $\mathcal{B}$ be the set of all graphs obtained from a cycle after adding a new vertex made adjacent to precisely one vertex of the cycle. Then the problem {\sc $\mathcal{B}$-Modified List Colouring} takes as input a graph~$G$ with a list assignment~$L$ and asks whether $G$ simultaneously has a colouring respecting $L$ and has a connected component that is a graph from $\mathcal{B}$.

We now prove that {\sc $\mathcal{B}$-Modified List Colouring} is not C123 but is C1$'$D. We distinguish between ``being polynomial-time solvable'' and ``being \NP-complete''. 
We first observe that ${\cal B}$ satisfies the following four properties:

\begin{enumerate}
\item For every integer~$p$, the $p$-subdivision of any graph in $\mathcal{B}$ is not in $\mathcal{B}$.  
\item We can recognize whether a graph belongs to ${\cal B}$ in polynomial time.
\item Every graph in $\mathcal{B}$ admits a $3$-colouring.
\item For every finite set ${\cal H}$ disjoint from ${\cal S}$, there is an ${\cal H}$-subgraph-free graph in~${\cal B}$.
 \end{enumerate}

\noindent
Due to Property~1, {\sc $\mathcal{B}$-Modified List Colouring} does not satisfy C3. Hence, {\sc $\mathcal{B}$-Modified List Colouring} is not a C123-problem. We will prove that {\sc $\mathcal{B}$-Modified List Colouring} is C1$'$D. As {\sc List Colouring} is C123 by Theorem~\ref{t-odd}, it satisfies C1 and thus C1$'$. By Property~2, we can check in polynomial time if a graph has a connected component in ${\cal B}$. Hence, {\sc $\mathcal{B}$-Modified List Colouring} satisfies C1$'$. Below we prove that it also satisfies condition D.

Let $i\geq 3$, and let ${\cal G}_i$ be the class of $(C_3,\ldots,C_i,K_{1,4}, \mathbb{H}_1,\ldots, \mathbb{H}_i)$-subgraph-free graphs. 
As {\sc List Colouring} is C123, it follows from the first statement that it is also C1$'$D. Hence, {\sc List Colouring} is \NP-complete on ${\cal G}_i$.
Let $(G,L)$ be an instance of {\sc List Colouring} where $G$ is a graph from ${\cal G}_i$. We note that $\{C_3,\ldots,C_i,K_{1,4}, \mathbb{H}_1,\ldots, \mathbb{H}_i)\} \cap {\cal S}=\emptyset$.
Hence, by Property~4, there is an ${\mathcal H}$-subgraph-free graph~$B\in {\cal B}$. Let $G'=G+B$. Extend $L$ to a list assignment $L'$ by giving each vertex of $B$ list $\{1,2,3\}$. We claim that $(G,L)$ is a yes-instance of {\sc List Colouring} if and only if $(G',L')$ is a yes-instance of {\sc $\mathcal{B}$-Modified List Colouring}.

First suppose $G$ has a colouring respecting $L$. By Property~$3$, $B$ is $3$-colourable. As vertices of $B$ have list $\{1,2,3\}$, $G'$ has a colouring respecting $L'$. As $G$ has $B\in {\cal B}$ as a connected component, $(G',L')$ is a yes-instance of {\sc $\mathcal{B}$-Modified List Colouring}.
Now suppose that $(G',L')$ is a yes-instance of {\sc $\mathcal{B}$-Modified List Colouring}. Then, $G'$ has a colouring respecting $L'$, and thus $G$ has a colouring respecting $L$. We conclude that
{\sc $\mathcal{B}$-Modified List Colouring} satisfies D and is thus a C1$'$D-problem. As we already showed that {\sc $\mathcal{B}$-Modified List Colouring} is not C123, this proves the second statement of the theorem. \qed
\end{proof}

\noindent
We also need a theorem from Bienstock, Robertson, Seymour and Thomas.

\begin{theorem}[\cite{BRST91}]\label{t-bi}
For every forest $F$, all $F$-minor-free graphs have path-width at most $|V(F)|-2$.
\end{theorem}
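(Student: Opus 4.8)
Since Theorem~\ref{t-bi} is quoted verbatim from~\cite{BRST91}, the paper needs only the citation; but were one to reprove it, the plan would be as follows. I would prove the equivalent contrapositive, in a slightly strengthened form --- \emph{for every forest~$F$ and every graph~$G$, if $\mathrm{pw}(G)\geq |V(F)|-1$ then $G$ contains~$F$ as a minor} --- and induct on $t:=|V(F)|$. The base cases $t\leq 2$ are immediate: then~$F$ is edgeless on at most one vertex, a single edge, or two isolated vertices, and $\mathrm{pw}(G)\geq t-1$ forces $G$ to contain~$F$ (for $t=2$, note $\mathrm{pw}(G)\geq 1$ already forces an edge).

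For the inductive step I would pick a vertex~$v$ of~$F$ of degree at most~$1$, which a forest always has, and set $F':=F-v$, so $|V(F')|=t-1$. When~$v$ is isolated, i.e.\ $F=F'+K_1$, the step is easy: adding a fixed $x\in V(G)$ to every bag of an optimal path decomposition of $G-x$ gives one of~$G$, so $\mathrm{pw}(G-x)\geq\mathrm{pw}(G)-1\geq t-2=|V(F')|-1$; the induction hypothesis then yields an $F'$-model in $G-x$, and $\{x\}$ serves as the branch set of~$v$. The real difficulty, and the main obstacle, is the case $\deg_F(v)=1$ with neighbour $u\in V(F')$: here I need an $F'$-model in~$G$ together with a further vertex adjacent to the branch set of~$u$ to use for~$v$, but the crude pathwidth-drop argument produces an $F'$-model with no control over where the ``leftover'' vertex sits.

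To overcome this I would strengthen the induction hypothesis to a \emph{rooted} version --- for any designated vertex~$r$ of~$F$, ask for an $F$-model of~$G$ whose branch set of~$r$ is a single prescribed vertex lying on the boundary of a suitably normalised optimal path decomposition of~$G$ --- and run the argument on a minor-minimal~$G$ with $\mathrm{pw}(G)\geq t-1$, which is connected with $\mathrm{pw}(G)=t-1$ exactly. One then peels a maximum-size bag (a separator of~$t$ vertices that are pairwise linked by internally disjoint paths through the rest of the decomposition) off one end of the decomposition, recurses on the remainder to realise the forest with a leaf deleted, and reattaches the leaf using one of the~$t$ boundary vertices while keeping the prescribed root on the current boundary. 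The delicate point is exactly this bookkeeping --- matching the moving boundary of the path decomposition against the leaf-removal order of~$F$ --- and it is also what pins the bound down to $|V(F)|-2$: a forest is assembled one leaf at a time, so each of its vertices consumes only a single unit of the ``boundary budget''. For the purposes of this paper it is enough to invoke Theorem~\ref{t-bi} as cited.
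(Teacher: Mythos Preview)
You are correct: the paper does not prove Theorem~\ref{t-bi} but simply invokes it as a black box from~\cite{BRST91}, exactly as your opening sentence says. The inductive sketch you append---strengthening to a rooted statement and peeling leaves of~$F$ against the moving boundary of an optimal path decomposition of a minor-minimal~$G$---is in the spirit of the original argument and is a reasonable outline, but none of it is needed for the paper, which treats the theorem purely as a citation.
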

 
 \noindent
 We now prove a result, which shows that the conditions C1$'$ and D are both necessary and sufficient. 
 
 \begin{theorem}\label{t-dicho4}
Let $\Pi$ be a problem. Then the following two statements are equivalent:
\begin{itemize}
\item [(i)] $\Pi$ is C1$'$D; and
\item [(ii)] for any finite set of graphs ${\cal H}$, the problem $\Pi$ on ${\cal H}$-subgraph-free graphs is efficiently solvable if ${\cal H}$ contains a graph from ${\cal S}$ 
and computationally hard otherwise.
\end{itemize}
\end{theorem}
 
 \begin{proof}
First assume that $\Pi$ is C1$'$D. 
Let ${\cal H}$ be a finite set of graphs. First suppose that ${\cal H}$ contains a graph~$H$ from ${\cal S}$.
Let $G$ be a ${\cal H}$-subgraph-free graph. As $G$ is ${\cal H}$-subgraph-free, $G$ is $H$-subgraph-free. It is known (see e.g.~\cite{GP14,GPR15}) that, for any graph $H'\in {\cal S}$, a $H'$-subgraph-free graph is also $H'$-minor-free. Hence, $G$ is $H$-minor-free. So by Theorem~\ref{t-bi}, $G$ has constant path-width at most $|V(H)|-2$, meaning we can solve $\Pi$ efficiently by C1$'$.

Now suppose that ${\cal H}$ contains no graph from ${\cal S}$. Let $H\in {\cal H}$. As $H\notin {\cal S}$, $H$ has a connected component containing a $K_{1,4}$ (or equivalently, a vertex of degree at least~$4$); or a cycle $C_h$ for some $h\geq 3$; or a graph $\mathbb{H}_i$ for some $i\geq 1$. Hence, the class of $H$-subgraph-free graphs contains the $K_{1,4}$-subgraph-free graphs; or $C_h$-subgraph-free graphs for some $h\geq 3$; or $\mathbb{H}_i$-subgraph-free graphs for some $i\geq 1$, each of which contains the $(C_3,\ldots,C_{j(H)},K_{1,4}, \mathbb{H}_1,\ldots, \mathbb{H}_{j(H)})$-subgraph-free graphs, where $j(H)=\max\{h,i\}$. 
Hence, the class of $H$-subgraph-free graphs contains the $(C_3,\ldots,C_{j(H)},K_{1,4}, \mathbb{H}_1,\ldots, \mathbb{H}_{j(H)})$-subgraph-free graphs. Consequently,
the class of ${\cal H}$-subgraph-free graphs contains the $(C_3,\ldots,C_{j^*},K_{1,4}, \mathbb{H}_1,\ldots, \mathbb{H}_{j^*})$-subgraph-free graphs, where $j^*=\max_{H\in {\cal H}}j(H)$ (note that $j$ exists, as ${\cal H}$ is finite).
As $\Pi$ satisfies D,  we find that $\Pi$ is computationally hard for $(C_3,\ldots,C_{j^*},K_{1,4}, \mathbb{H}_1,\ldots, \mathbb{H}_{j^*})$-subgraph-free graphs, and thus for ${\cal H}$-subgraph-free graphs.
 
\medskip
\noindent
Now assume that for any finite set of graphs ${\cal H}$, the problem $\Pi$ on ${\cal H}$-subgraph-free graphs is efficiently solvable if ${\cal H}$ contains a graph from ${\cal S}$  and computationally hard otherwise.
We first prove C1$'$. Let ${\cal H}$ be a finite set, such that the class of ${\cal H}$-subgraph-free graphs has bounded path-width. 
Recall that the latter holds if and only if ${\cal H}$ contains a graph from ${\cal S}$~\cite{RS84}. Hence, $\Pi$ satisfies C1$'$.

We now prove that condition $D$ holds. Let $i\geq 3$, and let ${\cal G}_i$ be the class of ${\cal H}$-subgraph-free graphs, where ${\cal H}=\{C_3,\ldots,C_i,K_{1,4}, \mathbb{H}_1,\ldots, \mathbb{H}_i\}$. Then ${\cal H}$ contains no graph from ${\cal S}$.
Hence, $\Pi$ is \NP-complete for ${\cal H}$-subgraph-free graphs. Hence, $\Pi$ satisfies~D. \qed
\end{proof}
 
 \noindent
 We are now ready to prove Theorem~\ref{t-dicho}, which we restate below.
 
\medskip
\noindent
{\bf  Theorem~\ref{t-dicho} (restated).}
{\it Let $\Pi$ be a C123-problem. For any finite set of graphs ${\cal H}$, the problem $\Pi$ on ${\cal H}$-subgraph-free graphs is efficiently solvable if ${\cal H}$ contains a graph from ${\cal S}$ 
 (or equivalently, if the class of ${\cal H}$-subgraph-free graphs has bounded treewidth)
and computationally hard otherwise.}
 
\begin{proof} 
The result follows from combining Theorems~\ref{t-cd} and~\ref{t-dicho4}, and the well-known fact
 that for a finite set of graphs ${\cal H}$, a class of ${\cal H}$-subgraph-free graphs has bounded path-width if and only if it has bounded treewidth if and only if ${\cal H}$ contains a graph from ${\cal S}$~\cite{RS84} (see e.g.~\cite{BL02,DP16}, for an explanation with respect to the more powerful parameter clique-width, and hence, replacing ``bounded pathwidth'' in C1$'$ by ``bounded treewidth'' or ``bounded clique-width'' yields the same equivalence as in Theorem~\ref{t-dicho4}). \qed
\end{proof} 

\medskip
\noindent
{\bf Remark.} We emphasize that we are not aware of any natural C1$'$D-problem that is not C123. As the conditions C1--C3 are more intuitive, we have therefore chosen to present our subgraph framework in terms of the C1--C3 conditions instead of the C1$'$-D conditions.

\section{Application to NP-Complete Problems}\label{s-npc}
We provide a complete dichotomy between polynomial-time solvability and \NP-completeness for many problems on ${\cal H}$-subgraph-free graphs by showing they are C123-problems. 
In Section~\ref{s-width}, we give examples of width parameter problems that are C123. In Section~\ref{s-cover} we give examples of partitioning, covering and packing problems that are C123.
In Section~\ref{s-network} we show the same for a number of network design problems. 
In fact we do a bit more. Namely, we also show that these problems belong to the minor and topological minor frameworks whenever the relevant \NP-completeness result applies to subcubic planar graphs. We will not explicitly remark this in the remainder of the section.

\subsection{Width Parameter Problems}\label{s-width}

Let {\sc Path-Width} and {\sc Tree-Width} be the problems of deciding for a given integer~$k$ and graph~$G$, if $G$ has path-width, or respectively, treewidth at most~$k$. 
We observe that it is unclear whether a path-width bound is maintained under subdivision, and thus proving property C3 for {\sc Path-Width} is non-trivial. We show a more specific result that is sufficient for our purposes. For {\sc Tree-width}, we can follow a more direct proof.

\begin{theorem}\label{t-pwpwpw}
{\sc Path-Width} is a C123-problem.
\end{theorem}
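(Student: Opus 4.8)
The plan is to verify the three conditions C1, C2, C3 separately. Condition~C1 is immediate: on any graph class of bounded treewidth (hence bounded path-width, by Bodlaender's algorithm or by Courcelle-type arguments), \textsc{Path-Width} is decidable in linear time. In fact, for fixed~$k$ one can test ``path-width at most~$k$'' in linear time on all graphs by Bodlaender's theorem on the complexity of path-width, so C1 holds in a strong form.

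For C2 I would invoke the known \NP-completeness of \textsc{Path-Width} on subcubic graphs. The classical reference is that \textsc{Path-Width} (equivalently, \textsc{Vertex Separation Number}, or \textsc{Interval Graph Completion}) is \NP-complete, and Monien and Sudborough showed hardness already holds for graphs of bounded degree; more precisely, \textsc{Path-Width} is \NP-complete on subcubic graphs (planar subcubic, even). So I would simply cite this and conclude C2.

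The substantive point is C3: I must exhibit a fixed $k\ge 1$ such that subdividing every edge of a subcubic graph $k$ times (or any multiple $kp$ of $k$ times) preserves \NP-hardness of \textsc{Path-Width}. The natural approach is to prove that path-width behaves controllably under subdivision: there should be a formula, or at least tight bounds, relating $\mathrm{pw}(G^{j})$ to $\mathrm{pw}(G)$ when $G$ is subcubic and $j$ is large enough. The key fact I would use is that for $\mathrm{pw}(G)\ge 2$ and any $j\ge 1$, the $j$-subdivision satisfies $\mathrm{pw}(G^{j})=\mathrm{pw}(G)$ — subdividing edges cannot decrease path-width below~$2$ for a graph that had path-width at least~$2$, and it clearly cannot increase it (one can simulate a path decomposition of $G$ by sliding the subdivision vertices of each edge through the window one at a time, which needs only one extra slot that is reused, so no increase beyond $\max\{\mathrm{pw}(G),2\}$). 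Since the subcubic instances produced by the C2 hardness reduction can be assumed to have path-width at least~$2$ (path-width~$\le 1$ means the graph is a caterpillar forest, a trivial case we can discard), we get $\mathrm{pw}(G^{kp})=\mathrm{pw}(G)$ for all $p\ge 1$ with $k=1$. Hence the reduction $G\mapsto G^{kp}$ with the same parameter~$k'=\mathrm{pw}(G)$ is a valid polynomial reduction witnessing hardness on $\mathcal{G}^{kp}$, so C3 holds with $k=1$.

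The main obstacle is the subdivision-invariance claim for path-width: I need the equality $\mathrm{pw}(G^{j})=\max\{\mathrm{pw}(G),2\}$ (or at least $\le$ in the direction that could be dangerous, and $\ge$ using that $G$ is a minor of $G^{j}$ so $\mathrm{pw}(G^{j})\ge\mathrm{pw}(G)$ always). The $\ge$ direction is free from minor-monotonicity of path-width; the $\le$ direction is where care is needed, and I would prove it by an explicit transformation of an optimal path decomposition of $G$, processing each original edge's subdivision path using a single shared auxiliary bag-slot, being careful that the two endpoints of a subdivided edge coexist in some bag of the original decomposition so the chain of new vertices can be threaded between consecutive bags without raising the width. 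Once this lemma is in hand, all three conditions follow and the theorem is proved.
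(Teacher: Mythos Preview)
Your plan for C1 and C2 matches the paper (which cites Bodlaender--Kloks for C1 and Monien--Sudborough for C2). The gap is in C3. You assert that $\mathrm{pw}(G^{j})=\mathrm{pw}(G)$ whenever $\mathrm{pw}(G)\ge 2$, but your justification does not establish the $\le$ direction: the ``slide the subdivision vertices through the window using one extra slot'' idea, taken literally, yields only $\mathrm{pw}(G^{j})\le \mathrm{pw}(G)+1$. An off-by-one here is fatal for the reduction, since from ``$\mathrm{pw}(G^{j})\le k$'' you cannot decide between $\mathrm{pw}(G)=k$ and $\mathrm{pw}(G)=k-1$. Your own final paragraph essentially concedes that the threading step is where the work lies, but ``using a single shared auxiliary bag-slot'' is exactly a $+1$ argument, not an equality argument; you have not explained how to free a slot inside a maximum bag without breaking contiguity of some other vertex.

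The paper avoids this difficulty by not proving the general invariance at all. It exploits that the Monien--Sudborough hardness already holds for graphs that are $2$-subdivisions of planar cubic graphs, and proves the restricted claim: if $G$ is a $2$-subdivision, then $\mathrm{pw}(G^{1})=\mathrm{pw}(G)$. The proof goes through the equivalence with the vertex separation number and the \emph{standard layouts} of Ellis--Sudborough--Turner: in a $2$-subdivision, each original edge $uv$ becomes a path $u\,a\,b\,v$, and one may assume a layout in which $a,b$ are consecutive with $u$ before them. Inserting the three new subdivision vertices of $ua,ab,bv$ immediately around $a,b$ can then be shown, position by position, not to enlarge any separator; the converse is minor-monotonicity. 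Since a $1$-subdivision of a $2$-subdivision is again a $2$-subdivision (of the $1$-subdivided base graph), this can be iterated, and together with minor-monotonicity it pins down the path-width of every $p$-subdivision of the hard instances, giving C3 with $k=1$. So either supply a genuine proof of your general invariance (which will require an argument at least as delicate as the paper's layout analysis), or restrict to the $2$-subdivision instances and use the standard-layout machinery.
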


\begin{proof}
{\sc Path-Width} is linear-time solvable for every graph class of bounded treewidth~\cite{BK96} so satisfies C1.
It is \NP-complete for $2$-subdivisions of planar cubic graphs~\cite{MS88} so satisfies C2. It also satisfies C3, as we will prove the following claim:

\medskip
\noindent
{\it Claim.} A graph $G=(V,E)$ that is a $2$-subdivision of a graph $G''$ has path-width~$k$ if and only if the $1$-subdivision $G'$ of $G$ has path-width~$k$.

\medskip
\noindent
To prove the claim, we use the equivalence of path-width to the vertex separation number~\cite{Ki92}. 
We recall the definition. Let $L$ be a bijection between $V$ and $\{1,\ldots,|V|\}$, also called a \emph{layout} of $G$. Let $V_L(i) = \{u \mid L(u) \leq i\ \mbox{and}\ \exists v \in V: uv \in E\ \mbox{and}\ L(v) > i\}$. Then $\operatorname{vs}_L(G) = \max_{i \in \{1,\ldots,|V|\}} \{|V_L(i)|\}$ and $\operatorname{vs}(G) = \min_L \{\operatorname{vs}_L(G)\}$. 

Following Kinnersley~\cite{Ki92}, $G$ has a layout $L$ such that $\operatorname{vs}_L(G) = k$. In a $2$-subdivision, any edge $uv$ gets replaced by edges $ua$, $ab$, and $bv$, where $a$ and $b$ are new vertices specific to the edge $uv$. In a \emph{standard layout} $L$ for $G$, $L(a) = L(b) - 1$ and $L(u) < L(a)$. Following Ellis, Sudborough and Turner~\cite{EllisST94}, we may assume that $L$ is a standard layout. 

For some edge $uv$ of $G''$ and its $2$-subdivision into $ua$, $ab$, $bv$, consider a further subdivision of each of these three edges. Let $x$, $y$, $z$ be the newly created vertices respectively. Modify $L$ by placing $x$ directly before $a$, $y$ between $a$ and $b$, and $z$ directly after $b$. Let $L'$ denote the new layout. For simplicity and abusing notation, we use $L'(x) = L(a)-\frac{1}{2}$, $L'(y) = L(a)+\frac{1}{2} = L(b)-\frac{1}{2}$ and $L'(z) = L(b)+\frac{1}{2}$ to denote the positions of $x$, $y$ and $z$ in the new layout respectively. For any $i < L(a) - \frac{1}{2}$, $V_{L'}(i) = V_L(i)$, because $L(a) > i$ and $L'(x) > i$. Next, we observe that $V_{L'}(L'(x)) = V_{L'}(L(a) - \frac{1}{2}) = (V_L(L(a)) \setminus\{a\}) \cup \{x\}$, because $b$ follows after $a$ in $L$ and now $a$ follows after $x$ in $L'$. Hence, it has the same size as $V_L(L(a))$, at most $k$. Similarly, we can observe that $V_{L'}(L(a)) = V_L(L(a))$ (note that $L'(u) = L(u) < L(a)$), $V_{L'}(L(a) + \frac{1}{2}) = (V_L(L(a)) \setminus\{a\}) \cup \{y\}$, and $V_{L'}(L(b)) = (V_L(L(a)) \setminus\{a\}) \cup \{b\}$, which all have size at most $k$.
We then observe that $V_{L'}(L(b) + \frac{1}{2})$ is equal to $V_L(L(b))$ with $b$ replaced by $z$ if $b \in V_L(L(b))$. Similarly, for any $i > L(b)$, if $b \in V_{L}(i)$, we can replace it by $z$ to obtain $V_{L'}(i)$; otherwise, $V_{L'}(i) = V_L(i)$. Note that $a$ is never part of $V_L(i)$ for $i > L(b)$. In all cases, the size remains bounded by $k$. Hence, $\operatorname{vs}_{L'} \leq k$ and by the aforementioned equivalence between path-width and vertex separation number~\cite{Ki92}, $G'$ has path-width at most~$k$.

As subdivision cannot decrease path-width (or consider the converse, contraction cannot increase it), the claim, and thus C3, is proven.
This finishes the proof of Theorem~\ref{t-pwpwpw}. \qed
\end{proof}

\begin{theorem}
{\sc Tree-Width} is a C123-problem.
\label{t-twtwtw}
\end{theorem}

\begin{proof}
{\sc Tree-Width} is linear-time solvable for every graph class of bounded treewidth~\cite{Bo96}.
Very recently, it was announced that {\sc Tree-Width} is \NP-complete for cubic graphs~\cite{BBJKLMOPS23} so the problem satisfies C2.
It also satisfies C3, as we will prove the following claim:

\medskip
\noindent
{\it Claim.} A graph $G$ has treewidth~$k$ if and only if the $1$-subdivision of $G$ has treewidth~$k$.

\medskip
\noindent
Taking a minor of a graph does not increase its treewidth so the treewidth cannot decrease after subdividing an edge.
If a graph $G$ has treewidth~$1$, then $G$ remains a tree after subdividing an edge.
Suppose $G$ has treewidth at least~$2$. Let $(T,{\cal X})$ be a tree decomposition of $G$ with width~$k\geq 2$. Let $G'$ be the graph obtained by replacing an edge $uv$ with edges $ux$ and $xv$ for a new vertex~$x$.
Pick an arbitrary bag $B$ from ${\cal X}$ containing $u$ and $v$. Introduce the bag $\{u,v,x\}$ and make the corresponding node adjacent to the $B$-node. This yields a tree decomposition of $G'$ of width $k$, as $k\geq 2$ and the bag we added has size~$3$. Hence, the claim and thus the theorem is proven.\qed
\end{proof}

\subsection{Partitioning, Covering and Packing Problems}\label{s-cover}

The {\sc Vertex Cover} problem is to decide if a graph has a vertex cover of size at most~$k$ for some given integer~$k$.
The {\sc Independent Set} problem is to decide if a graph has an independent set of size at least~$k$ for some given integer~$k$. 
Note that {\sc Vertex Cover} and {\sc Independent Set} are polynomially equivalent. In the following theorem we recover the classification of~\cite{AK92}.

\begin{theorem}
{\sc Vertex Cover} and {\sc Independent Set} are C123-problems.
\end{theorem}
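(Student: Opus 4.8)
The plan is to exploit the polynomial equivalence of {\sc Vertex Cover} and {\sc Independent Set}, so that it suffices to verify C1, C2 and C3 for {\sc Vertex Cover}. Condition~C1 is immediate: {\sc Vertex Cover} is expressible in monadic second-order logic (or admits a direct $O(2^{c}\cdot n)$ dynamic programming algorithm over a tree decomposition of width~$c$), and hence is linear-time solvable on every graph class of bounded treewidth. Condition~C2 is classical: {\sc Vertex Cover} is \NP-complete already on cubic (indeed, cubic planar) graphs, which can be cited directly from the literature.

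For C3 the plan is to use the well-known \emph{double-subdivision lemma}: if $G''$ is obtained from a graph $G$ by subdividing a single edge exactly twice, then, writing $\tau(\cdot)$ for the minimum size of a vertex cover, $\tau(G'')=\tau(G)+1$. I would prove both inequalities directly. For ``$\le$'': given a minimum vertex cover $S$ of $G$ and the subdivided edge $uv$ replaced by the path $u\,a\,b\,v$, add $b$ to $S$ if $u\in S$, and otherwise add $a$ to $S$ (noting that then $v\in S$); a one-line check shows the result covers $ua$, $ab$ and $bv$, so $\tau(G'')\le\tau(G)+1$. For ``$\ge$'': take a minimum vertex cover $S''$ of $G''$ and do a short case analysis on $S''\cap\{a,b\}$. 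The case $S''\cap\{a,b\}=\emptyset$ is impossible since the edge $ab$ would be uncovered; in each of the three remaining cases one deletes one of $a,b$ from $S''$ and, where necessary, observes that $u$ or $v$ already lies in $S''$, thereby obtaining a vertex cover of $G$ of size $|S''|-1$.

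I would then iterate the lemma: applying it $p$ times to every edge of $G$ produces the $2p$-subdivision $G_{2p}$ of $G$ and yields $\tau(G_{2p})=\tau(G)+p\,|E(G)|$. Consequently the map $(G,k)\mapsto(G_{2p},\,k+p\,|E(G)|)$ is a polynomial-time reduction from {\sc Vertex Cover} to itself; moreover subdividing edges never raises the maximum degree, so $G_{2p}$ is subcubic whenever $G$ is, hence $G_{2p}$ lies in ${\cal G}^{2p}$ for the class ${\cal G}$ of subcubic graphs. Therefore, if {\sc Vertex Cover} is \NP-hard for ${\cal G}$, it is \NP-hard for ${\cal G}^{2p}$ for every $p\ge 1$, which is exactly C3 with $k=2$. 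The statement for {\sc Independent Set} then follows by polynomial equivalence (equivalently, the double-subdivision lemma in the form $\alpha(G'')=\alpha(G)+1$ is Poljak's lemma and gives C3 for {\sc Independent Set} directly).

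The only non-routine step is the ``$\ge$'' direction of the double-subdivision lemma; everything else is bookkeeping. I expect the case analysis on $S''\cap\{a,b\}$ to be the part that needs to be written out with care, since in each surviving case one must argue that an endpoint of the subdivided edge is forced into $S''$, so that removing an internal vertex still leaves the original edge $uv$ covered.
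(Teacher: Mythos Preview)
Your proposal is correct and follows essentially the same approach as the paper: for C1 and C2 the paper simply cites \cite{AP89} and \cite{Mo01}, and for C3 it invokes Poljak's $2$-subdivision lemma \cite{Po74} (stated in the independent-set form $\alpha(G_2)=\alpha(G)+m$), which is precisely the lemma you propose to prove by hand and then iterate. One small remark on the case analysis you flagged: when $\{a,b\}\subseteq S''$ neither $u$ nor $v$ is forced into $S''$, so ``delete one of $a,b$'' is not enough on its own---the clean fix is to delete both $a$ and $b$ and add one of $u,v$ back (or first swap $a$ for $u$ and reduce to a previous case), still yielding a cover of $G$ of size $|S''|-1$.
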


\begin{proof}
Both are linear-time solvable for graphs of bounded treewidth~\cite{AP89} so satisfy C1.
Both are \NP-complete for $2$-connected cubic planar graphs~\cite{Mo01} so satisfy C2. They also satisfy C3, as 
a graph $G$ on $m$ edges has an independent set of size~$k$ if and only if the $2$-subdivision of $G$ has an independent set of size $k+m$~\cite{Po74}. \qed
\end{proof}

\noindent
A set $D\subseteq V$ is {\it dominating} a graph $G=(V,E)$ if every vertex 
The {\sc (Independent) Dominating Set} problem is to decide if a graph has an (independent) dominating set of size at most~$k$ for some integer~$k$.
A set $F\subseteq E$ is an {\it edge dominating set} if every edge in $E\setminus F$ shares an end-vertex with an edge of $F$. The corresponding decision problem is {\sc Edge Dominating Set}. Recall that this problem is polynomially equivalent to {\sc Minimum Maximal Matching}~\cite{Ha91}.

The following theorem shows that both problems are C123, just like {\sc Dominating Set}; hence, we recover the classification of~\cite{AK92} for the latter problem.

\begin{theorem}
{\sc Dominating Set}, {\sc Independent Dominating Set} and {\sc Edge Dominating Set} are C123-problems.
\end{theorem}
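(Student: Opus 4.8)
The plan is to verify conditions C1, C2 and C3 for each of the three problems. Condition C1 is immediate: each of ``$G$ has a dominating set of size at most $k$'', ``$G$ has an independent dominating set of size at most $k$'' and ``$G$ has an edge dominating set of size at most $k$'' is expressible in monadic second-order logic, so by Courcelle's theorem (or by the routine dynamic programming over a tree decomposition) all three problems are solvable in linear time on every graph class of bounded treewidth. Condition C2 is known: {\sc Dominating Set} is \NP-complete already on planar cubic graphs, {\sc Independent Dominating Set} is \NP-complete on subcubic graphs, and {\sc Edge Dominating Set} (equivalently {\sc Minimum Maximal Matching}) is \NP-complete on subcubic (indeed planar bipartite subcubic) graphs.

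The substance is condition C3, where I would take $k=3$ for all three problems and prove that for every graph $G$ with $m$ edges and every integer $p\ge 1$,
\[
  \gamma(G^{3p})=\gamma(G)+pm,\qquad
  \gamma_i(G^{3p})=\gamma_i(G)+pm,\qquad
  \gamma'(G^{3p})=\gamma'(G)+pm,
\]
where $\gamma$, $\gamma_i$ and $\gamma'$ are the domination, independent-domination and edge-domination numbers. Granting these identities, C3 follows at once: a $(3p)$-subdivision of a subcubic graph is again subcubic, and $pm\le 3pn/2$ is bounded by a polynomial in $|V(G)|$ for each fixed $p$, so $(G,\ell)\mapsto(G^{3p},\ell+pm)$ is a polynomial-time many-one reduction from each problem on subcubic graphs to the same problem on the class of $(3p)$-subdivisions of subcubic graphs; since the former problem is \NP-hard by C2, so is the latter, for every $p\ge 1$.

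The inequalities ``$\le$'' in the identities are by direct construction. For {\sc Dominating Set}, take a minimum dominating set $D$ of $G$; for each $u\notin D$ fix a neighbour $d(u)\in D$ and call $u\,d(u)$ the dominator edge of $u$. On the $(3p)$-subdivision of a dominator edge, with internal vertices $w_1,\dots,w_{3p}$ numbered starting from $u$, pick $w_1,w_4,\dots,w_{3p-2}$ (so $w_1$ dominates $u$, the picked vertices dominate $w_1,\dots,w_{3p-1}$, and $w_{3p}$ is dominated from $d(u)\in D$); on every other subdivided edge pick $w_2,w_5,\dots,w_{3p-1}$, which dominate all internal vertices, the endpoints being dominated from $D$ or via their own dominator edges. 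This adds exactly $p$ vertices per edge of $G$, for a dominating set of size $\gamma(G)+pm$. For {\sc Independent Dominating Set} one checks that this same set is independent (the picked internal vertices are pairwise non-adjacent and none is adjacent to a vertex of $D$). For {\sc Edge Dominating Set}, start from a minimum edge dominating set $F$ of $G$ that is a matching, select on the $(3p)$-subdivision of an edge of $F$ some $p+1$ edges (two touching the original endpoints, the rest spread along the path) and on every other subdivided edge $p$ edges spread along the path; using that $F$ dominates $E(G)$, every edge of $G^{3p}$ then meets a selected edge, and the total size is $(p+1)|F|+p(m-|F|)=\gamma'(G)+pm$.

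The main obstacle is the reverse inequality ``$\ge$'' in the three identities: one must show that no feasible solution of $G^{3p}$ can beat $\gamma(\cdot)+pm$. The straightforward part is that the $3p$ consecutive degree-$2$ internal vertices (respectively, the $3p+1$ internal edges) of each subdivided edge force at least $p$ elements into any feasible solution, and these forced contributions are pairwise disjoint over the edges of $G$, which already gives the additive term $pm$. The delicate part is to argue that a solution using only those $p$ elements on a given subdivided edge is compelled to dominate the corresponding original vertices from elsewhere in $G^{3p}$, so that after a normalisation step — sliding the chosen elements along each subdivided path into canonical positions, without increasing the size and, in the independent-domination case, without creating an edge inside the solution — the restriction of the solution to $V(G)$, augmented by one vertex for each subdivided edge through whose first internal vertex it dominates an endpoint, is a feasible solution of $G$ of size at most the total size minus $pm$. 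Making this normalisation precise and uniform across the three problems simultaneously is where the real work lies.
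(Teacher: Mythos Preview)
Your plan coincides with the paper's: C1 via algorithms on bounded treewidth (the paper cites \cite{AP89,TP97,ALS91} directly rather than invoking Courcelle), C2 via known \NP-completeness results on subcubic planar graphs \cite{GJ79,CCJ90,YG80}, and C3 with $k=3$ through the identity that a $3$-subdivision shifts the optimum value by exactly~$m$. The one difference is that the paper does not prove this identity at all: it quotes the single-step version (a graph $G$ with $m$ edges has a dominating / independent dominating / edge dominating set of size~$k$ if and only if $G^{3}$ has one of size $k+m$) and refers to Chleb\'{\i}k and Chleb\'{\i}kov\'{a}~\cite{CC07} for a proof.

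Your ``$\le$'' constructions are correct, and the ``$\ge$'' direction you flag as the delicate part is precisely what the paper outsources to~\cite{CC07}; the normalisation you sketch is the content of that reference. So there is no gap in your plan beyond what a citation already resolves. Incidentally, stating the identity for $G^{3p}$ with shift $+pm$, as you do, is tidier for the definition of~C3 than the paper's single-step formulation, since iterating the $3$-subdivision does not literally yield $G^{3p}$; the argument in~\cite{CC07} adapts to the $(3p)$-subdivision without change.
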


\begin{proof}
{\sc Dominating Set}~\cite{AP89}, {\sc Independent Dominating Set}~\cite{TP97} and {\sc Edge Dominating Set}~\cite{ALS91} are linear-time solvable for graphs of bounded treewidth so satisfy C1.
{\sc Dominating Set}~\cite{GJ79}, {\sc Independent Dominating Set}~\cite{CCJ90} and {\sc Edge Dominating Set}~\cite{YG80} are \NP-complete for planar subcubic graphs so satisfy C2.
For showing C3 we use the following claim (see for example~\cite{CC07}) for a proof).
A graph $G$ with $m$ edges has a dominating set, independent dominating set or edge dominating set of size~$k$ if and only if the $3$-subdivision of $G$
has a dominating set, independent dominating set or edge dominating set, respectively, of size $k+m$. \qed
\end{proof}

\noindent
For a graph $G=(V,E)$, a function $c:V\to \{1,2\ldots\}$ is a {\it colouring} of $G$ if $c(u)\neq c(v)$ for every pair of adjacent vertices $u$ and $v$. If $c(V)=\{1,\ldots,k\}$ for some integer $k\geq 1$, then $c$ is also said to be a {\it $k$-colouring}. Note that a $k$-colouring of $G$ partitions $V$ into $k$ independent sets, which are called {\it colour classes}.
The {\sc $3$-Colouring} problem is to decide if a graph has a $3$-colouring.
A {\it list assignment} of a graph $G=(V,E)$ is a function $L$ that associates a {\it list of admissible colours} $L(u)\subseteq \{1,2,\ldots\}$ to each vertex $u\in V$. A colouring $c$ of $G$  {\it respects} ${L}$ if $c(u)\in L(u)$ for every $u\in V$. The {\sc List Colouring} problem is to decide if a graph $G$ with a list assignment~$L$ has a colouring that respects $L$. 
An {\it odd cycle transversal} in a graph $G=(V,E)$ is a subset~$S\subseteq V$ such that $G-S$ is bipartite. If $S$ is independent, then $S$ is an {\it independent odd cycle transversal}. The {\sc (Independent) Odd Cycle Transversal} problem is to decide if a graph has an (independent) odd cycle transversal of size at most~$k$ for a given integer~$k$. Note that a graph has an independent odd cycle transversal of size at most~$k$ if and only if it has $3$-colouring in which one of the colour classes has size at most~$k$.

Recall that $3$-{\sc Colouring} is not a C123-problem, as it does not satisfy C2 (see also Table~\ref{t-thetable}).
Indeed, because of Brooks' theorem~\cite{Br41}, it is polynomial-time solvable on subcubic graphs. 
This is in contrast to the situation for {\sc List Colouring}, {\sc Odd Cycle Transversal} and {\sc Independent Odd Cycle Transversal}: we show that all three problems are C123, and in this way recover the classification of~\cite{GP14} for {\sc List Colouring}.
 Our proof shows in particular that all three problems are in fact \NP-complete for planar subcubic graphs.

\begin{theorem}\label{t-odd}
{\sc List Colouring}, {\sc Odd Cycle Transversal} and {\sc Independent Odd Cycle Transversal} are C123-problems.
\end{theorem}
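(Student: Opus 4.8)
The plan is to verify conditions C1, C2 and C3 for both problems, carrying out \textsc{Odd Cycle Transversal} in full and then deducing \textsc{Independent Odd Cycle Transversal} from it.

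\emph{C1.} For a graph $G$, the property ``there is an (independent) set $S$ with $G-S$ bipartite'' is expressible in monadic second-order logic: ``$G-S$ is bipartite'' asserts that $V(G)\setminus S$ splits into two independent sets, which quantifies only over vertex subsets, and ``$S$ is independent'' is first-order. Hence the optimisation form of Courcelle's theorem gives linear-time algorithms for both problems on every class of bounded treewidth, and therefore of bounded path-width, so C1 (and C1$'$) holds. One may instead run the routine dynamic program over a tree decomposition, storing in each bag the trace of $S$ together with a proper $2$-colouring of the part of $G-S$ seen so far.

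\emph{C3 for \textsc{Odd Cycle Transversal}.} I would take $k=2$ and prove the sharper statement that $\operatorname{oct}(G^{2p})=\operatorname{oct}(G)$ for every graph $G$ and every $p\ge 1$, where $G^{2p}$ is the $2p$-subdivision of $G$. First, since every subdivision vertex of $G^{2p}$ has degree $2$, a cycle of $G^{2p}$ that enters a subdivided path must traverse it completely; thus the cycles of $G^{2p}$ are precisely the $2p$-subdivisions of the cycles of $G$, and as the $2p$-subdivision of an $\ell$-cycle has length $\ell(2p+1)\equiv\ell\pmod 2$, for any $S\subseteq V(G)$ the graph $G-S$ is bipartite if and only if $G^{2p}-S$ is bipartite. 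Second, if a transversal $S'$ of $G^{2p}$ contains a subdivision vertex $x$ on the path replacing an edge $uv$, then every odd cycle of $G^{2p}$ through $x$ passes through $u$, so $(S'\setminus\{x\})\cup\{u\}$ is again a transversal of $G^{2p}$ of no larger size; iterating moves any transversal of $G^{2p}$ into $V(G)$ without enlarging it, and the first point then gives the claimed equality. Feeding this into C3 yields hardness on $\mathcal{G}^{2p}$ for every $p$.

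\emph{C2 and the \textsc{Independent} variant.} For C2 I would invoke that \textsc{Odd Cycle Transversal} is \NP-complete on subcubic graphs (this is known; failing a direct citation, one can reduce from an \NP-hard problem on cubic graphs -- e.g.\ use triangle gadgets to reduce \textsc{Vertex Cover} to \textsc{Odd Cycle Transversal} and then apply a standard degree-reduction). The step I expect to be the main obstacle is \textsc{Independent Odd Cycle Transversal}, since subdividing an instance of it is \emph{not} answer-preserving: $K_4$ has no independent odd cycle transversal at all, while every subdivision of $K_4$ has one of size $2$ (take a subdivision vertex on each of two disjoint edges). The remedy is to reduce \emph{from} \textsc{Odd Cycle Transversal} on subcubic graphs, via $G\mapsto G^{2p}$ with the same budget $k$: for $p\ge 1$ every edge of $G$ becomes a path of length at least $3$ in $G^{2p}$, so $V(G)$ is independent in $G^{2p}$; hence any odd cycle transversal of $G$ (a subset of $V(G)$) is at the same time an independent odd cycle transversal of $G^{2p}$ of the same size, while conversely any independent odd cycle transversal of $G^{2p}$ can be pushed into $V(G)$ by the move of the previous paragraph and is then, by the first observation there, an odd cycle transversal of $G$ of no larger size. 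Since $G^{2p}$ is the $2p$-subdivision of the subcubic graph $G$, taking $p=1$ gives that \textsc{Independent Odd Cycle Transversal} is \NP-complete on subcubic graphs (C2), and arbitrary $p$, together with the \NP-hardness of \textsc{Odd Cycle Transversal} on subcubic graphs, gives hardness on $\mathcal{G}^{2p}$ for every $p$ (C3 with $k=2$). The remaining work is thus (a) settling C2 for the plain problem and (b) the independence bookkeeping above, whose essence is simply that heavy subdivision is what converts a transversal into an independent one.
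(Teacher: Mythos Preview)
Your argument is correct. The approaches differ mainly in how C2 and the independent variant are handled. The paper gives an explicit reduction from \textsc{Not-All-Equal 3-Sat} to a family of subcubic graphs in which every minimum odd cycle transversal is already independent (each clause contributes a triangle and the transversal must pick exactly one vertex from each); this single construction simultaneously yields C2 for both problems, and then the paper invokes the same $2$-subdivision claim you prove for C3, observing that on this particular family minimum transversals remain independent after subdivision. You instead treat C2 for \textsc{Odd Cycle Transversal} as a known fact and derive everything about \textsc{Independent Odd Cycle Transversal} by the clean observation that in $G^{2p}$ the original vertex set is independent, so $\operatorname{oct}(G)=\operatorname{ioct}(G^{2p})$ for all $p\ge 1$. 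Your route avoids the bespoke gadget and is more modular; the paper's route is self-contained for C2 and does not need to cite subcubic hardness of \textsc{Odd Cycle Transversal} from elsewhere. One caveat: your sketched triangle-gadget reduction from \textsc{Vertex Cover} does not immediately give a subcubic instance (attaching a triangle to every edge doubles degrees), so if you want a self-contained C2 you should either supply the degree-reduction details or adopt a direct construction along the lines of the paper's.
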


\begin{proof}
Jansen and Scheffler~\cite{JS97} proved that {\sc List Colouring} can be solved in linear time on graphs of bounded treewidth, so satisfies C1.
Both {\sc Odd Cycle Transversal} and {\sc Independent Odd Cycle Transversal} are linear-time solvable for graphs bounded treewidth~\cite{ALS91,FHRV08} so satisfy C1.

To prove C2 for all three problems, we modify the standard reduction to {\sc 3-Colouring} for planar graphs, which is from {\sc Planar $3$-Satisfiability} (we use the reduction form Proposition~2.27 of \cite{Go08}). This enables us to prove that all three problems are \NP-complete even for planar subcubic graphs.

The problem {\sc Planar $3$-Satisfiability} is known to be \NP-complete even when each literal appears in at most two clauses (see Theorem 2 in \cite{DDD16}). It is defined as follows. Given a CNF formula~$\phi$ that consists of a set $X= \{x_1,x_2,...,x_n\}$ of 
Boolean variables, and a set $C = \{C_1, C_2, . . . , C_m\}$ of two-literal or three-literal clauses over $X$, does there exist a truth assignment for $X$ such that each $C_j$ contains at least one true literal? If such a truth assignment exists, then $\phi$ is {\it satisfiable}.

Let $\phi$ be an instance of  {\sc Planar $3$-Satisfiability} on $n$ variables and $m$ clauses.
From $\phi$ we construct a graph $G$ as follows:

\begin{itemize}
    \item For $i=1,\ldots,n$, add the {\it literal vertices} ${x_i}$ and ${\overline{x_i}}$ and the edge $x_i\overline{x_i}$.
    \item Add a path $P$ of $2m$ vertices. The odd vertices represent {\it false} and the even vertices {\it true}.
    \item For each clause $C_j$, add a {\it clause gadget} as in Fig.~\ref{fig:3-col-gadget} with three labelled vertices $c_{j_1}, c_{j_2}, c_{j_3}$ as well as an {\it output} vertex labelled $c_j$.
    \item Fix an order of the literals $x_{j_1}, x_{j_2}, x_{j_3}$ of each three-literal clause $C_j$ and for $h=1,\ldots,3$, identify $x_{j_h}$ with $c_{j_h}$.
    \item Fix an order of the literals $x_{j_1}, x_{j_2}$ of each two-literal clause $C_j$ and for $h=1,\ldots,2$, identify $x_{j_h}$ with~$c_{j_h}$.
    \item Add an edge between $c_i$ and the $i$th odd vertex (representing false) of $P$.
    \item Add an edge between any unused input $c_{i_3}$ and the $i$th even vertex (representing true) on $P$.
\end{itemize}

\noindent
Note that $G$ is subcubic. Let us argue that $G$ is also planar. In {\sc Planar $3$-Satisfiability}, the bipartite incidence graph of clauses with variables is planar. We build $G$ so as to be planar in the following way. Uppermost we place the literals assigned to the inputs of the clause gadgets in just the manner prescribed in the bipartite incidence graph. Lowermost, we place the path of length $2m$ that will be joined on the odd vertices to the output nodes of the clause gadgets. 

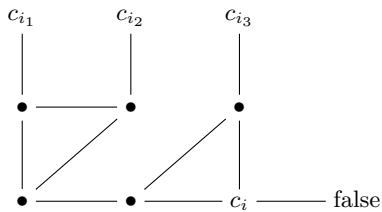
\begin{figure}[htbp]
\centering
\[
\xymatrix{
c_{i_1} \ar@{-}[d]& c_{i_2} \ar@{-}[d] & c_{i_3} \ar@{-}[d] & \\
\bullet \ar@{-}[r] \ar@{-}[d] & \bullet & \bullet \ar@{-}[d] & \\
\bullet \ar@{-}[r] \ar@{-}[ur]  & \bullet \ar@{-}[r] \ar@{-}[ur]  & c_i  \ar@{-}[r]  & \mathrm{false} \\
}
\]
\caption{The clause gadget in the reduction from {\sc (Planar) $3$-Satisfiability} to {\sc $3$-Colouring} drawn with an edge connecting the output node to a vertex representing false. The property that the gadget enforces is that not all of the three input nodes $c_{i_1}$, $c_{i_2}$, $c_{i_3}$ may be coloured the same as the vertex representing false.}
\label{fig:3-col-gadget}
\end{figure}

We will first prove that $G$ has an independent odd cycle transversal of size~$2m$ if and only if $\phi$ is satisfiable.
First suppose that $G$ has an independent odd cycle transversal~$S=\{v_1,\ldots,v_{2m}\}$ of size~$2m$. As $G$ contains $2m$ triangles, two in each clause gadget,  $T_1,\ldots,T_{2m}$, we may assume without loss of generality that $v_i\in V(T_i)$ for every $i\in \{1,\ldots,2m\}$.
Note that $G-S$ is bipartite by the definition of an odd cycle transversal.  Thus we can find a $3$-colouring of $G$ by colouring every vertex of $S$ with colour~$1$ and colouring every vertex of $G-S$ with colours~$2$ and~$3$. As each literal vertex belongs to $G-S$, it is assigned either colour $2$ or colour $3$, just like each vertex of $P$. Let us assume, without loss of generality, that the odd vertices on this path are coloured $3$. Hence, $2$ represents true and $3$ false. But now, by construction of the clause gadget, at least one of the vertices $c_{j_1}, c_{j_2}$ and $c_{j_3}$ is coloured $2$ and is identified with a literal for $j=1,\ldots m$, and therefore we deduce that $\phi$ is satisfiable.

Now suppose that $\phi$ is satisfiable. Colour the vertices of $P$ alternatingly with $3$ and $2$. In each clause, colour each true literal with colour~$2$ and each false or unused literal with colour $3$. Then, by construction of the clause gadget, we can extend this to a $3$-colouring of $G$. Let $S$ be the set of vertices of $G$ coloured~$1$. Then $S\subseteq V(T_1)\cup \cdots \cup V(T_{2m})$. Since we created a proper colouring, $S$ consists of exactly one vertex of each~$T_i$ and its vertices are pairwise non-adjacent. So $S$ is an independent odd cycle transversal of $G$ of size~$2m$. 

To prove C2 for {\sc List Colouring}, we use exactly the reduction above, with the literal vertices, any unused input $c_{i_3}$ and the vertices of $P$ assigned the list $\{2,3\}$, but all other vertices permitted to be any of the three colours.
Hence, as every list will be a subset of $\{1,2,3\}$, this result even holds for {\sc List $3$-Colouring}.

\medskip
\noindent
We now prove C2 for {\sc Independent Odd Cycle Transversal} and C3 for {\sc Odd Cycle Transversal} and {\sc Independent Odd Cycle Transversal}. To prove C3 for {\sc Odd Cycle Transversal}, we can just use the following claim (see for example~\cite{CHJMP18} for a proof):

\medskip
\noindent
{\it Claim.} The size of a minimum odd cycle transversal of $G$ is equal to the size of a minimum odd cycle transversal of the $2$-subdivision of $G$.

\medskip
\noindent
We now show C2 and C3 for {\sc Independent Odd Cycle Transversal} by proving that {\sc Independent Odd Cycle Transversal} is \NP-complete for $2p$-subdivisions of subcubic planar graphs.
Consider the subclass of planar subcubic graphs that correspond to instances of
 {\sc Planar $3$-Satisfiability} as defined in our proof for C2 for {\sc Odd Cycle Transversal}.
We now apply the above Claim sufficiently many times. In the graph~$G'$ resulting from the $2p$-subdivision, any minimum odd cycle transversal will also be an independent odd cycle transversal (by inspection of the proof for C2 for {\sc Odd Cycle Transversal}, because the cycles that were once triangles become further and further apart). 

By inspection of the proof of Lemma~3 in~\cite{GP14}, also {\sc List Colouring} satisfies C3.\qed
\end{proof}

\noindent
A {\it $P_3$-factor} or {\it perfect $P_3$-packing} of a graph $G=(V,E)$ with $|V|=3k$ for some integer $k\geq 1$ is a partition of~$V$ into subsets $V_1,\ldots,V_k$, such that each $G[V_i]$ is either isomorphic to $P_3$ or $K_3$. The corresponding decision problem, which asks whether a graph has such a partition, is known as {\sc $P_3$-Factor} or {\sc Perfect $P_3$-Packing}.
We show that {\sc $P_3$-Factor} is a C123-problem, a result which is essentially due to~\cite{ABKL07}.

\begin{theorem}
{\sc $P_3$-Factor} is a C123-problem.
\end{theorem}

\begin{proof}
This follows from combining Proposition~1 of~\cite{ABKL07} for showing C1 with Lemma~12 of~\cite{ABKL07} for showing C2 and C3 (with $k=3$). Recently, Xi and Lin~\cite{XL21} proved that {\sc $P_3$-Factor} is \NP-complete even for claw-free planar cubic graphs, which also proves C2. \qed
\end{proof}

\subsection{Network Design Problems}\label{s-network}

\noindent
A {\it (vertex) cut} of a graph $G=(V,E)$ is a partition $(S,V\setminus S)$ of $V$. The {\it size} of~$(S,V\setminus S)$ is the number of edges with one end in $S$ and the other in $V\setminus S$. 
The {\sc Max-Cut} problem is to decide if a graph has a cut of size at least~$k$ for some integer~$k$. By combining the next result with Theorem~\ref{t-dicho}, we recover the classification of~\cite{Ka12}.

\begin{theorem}[\cite{Ka12}]
{\sc Max-Cut} is a C123-problem.
\end{theorem}

\begin{proof}
{\sc Max-Cut} is linear-time solvable for graphs of bounded treewidth~\cite{ALS91} and \NP-complete for subcubic graphs~\cite{Ya78} so satisfies C1 and C2.
A cut $C$ of a graph $G$ is {\it maximum} if $G$ has no cut of greater size. Kami\'nski~\cite{Ka12} proved that a graph $G=(V,E)$ has a maximum cut of size at least~$c$ if and only if the $2$-subdivision of $G$ has a maximum cut of size at least~$c+2|E|$. This shows C3. \qed
\end{proof}

\noindent
Let $G=(V,E)$ be a graph.
A set $M\subseteq E$ is a {\it perfect matching} if no two edges in $M$ share an end-vertex and moreover, every vertex of the graph is incident to an edge of $M$.
A set $M\subseteq E$ is an {\it edge cut} of $G$ if it is possible to partition $V$ into two sets $B$ and $R$, such that $M$ consists of all the edges with one end-vertex in $B$ and the other one in $R$. 
A set $M\subseteq E$ is a {\it perfect matching cut} of $G$ if $M$ is a perfect matching that is also an edge cut. The {\sc Perfect Matching Cut} is to decide if a graph has a perfect matching cut. 
Lucke et al.~\cite{FLPR} recently showed that {\sc Perfect Matching Cut} is C123. 

\begin{theorem}[\cite{FLPR}]\label{t-mc}
{\sc Perfect Matching Cut} is a C123-problem.
\end{theorem}

\begin{proof}
 Le and Telle~\cite{LT21} observed that {\sc Perfect Matching Cut} is polynomial-time solvable for graphs of bounded treewidth. In the same paper~\cite{LT21}, they also proved that for every integer~$g\geq 3$, it is \NP-complete even for subcubic bipartite graphs of girth at least $g$. Hence, {\sc Perfect Matching Cut} satisfies C1 and C2. The \NP-completeness proof in~\cite{LT21} implicitly showed that to get C3 we may take $k=4$ (see also~\cite{FLPR}).
 
We note that C2 also follows for {\sc Perfect Matching Cut} from a recent result of Bonnet, Chakraborty and Duron~\cite{BCD23}, who proved that {\sc Perfect Matching Cut} is \NP-complete even for $3$-connected subcubic planar graphs.
\qed
\end{proof}

\noindent
Given a graph $G$ and 
a set of terminals $T\subseteq V(G)$, and an integer $k$, the problems {\sc Edge (Node) Steiner Tree} are to decide if $G$ has a subtree containing all the terminals of $T$, such that the subtree has at most~$k$ edges (vertices). 
We give explicit proofs that {\sc Node Steiner Tree} and {\sc Edge Steiner Tree} are \NP-complete on planar subcubic graphs and that this is maintained under subdivision, leading to these two problems being C123-problems. 

\begin{theorem}\label{EST:C123}
{\sc Edge} and {\sc Node Steiner Tree}  are C123-problems.	
\end{theorem}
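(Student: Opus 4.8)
The plan is to verify the three defining conditions of a C123-problem --- C1 (efficient on bounded treewidth), C2 (hard on subcubic graphs), C3 (hardness preserved under edge subdivision of subcubic graphs) --- for each of {\sc Edge Steiner Tree} and {\sc Node Steiner Tree}. Conditions C1 and C2 will be dispatched by invoking known facts, and essentially all of the work goes into C3.

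For C1, I would use the standard dynamic programming over a tree decomposition: each node of the decomposition stores, for every partition of its bag, the minimum number of edges (for the edge version) or vertices (for the node version) of a forest that realises that connectivity pattern on the bag and contains all terminals already forgotten. For treewidth bounded by a constant this runs in polynomial (indeed, for fixed width, linear) time, so C1 holds; this is folklore and can also be seen via the optimisation form of Courcelle's theorem. For C2, I would invoke the known \NP-completeness of {\sc Edge/Node Steiner Tree} on subcubic graphs, which follows either from the classical \NP-hardness of {\sc Steiner Tree} together with a routine degree-reduction (splitting high-degree vertices into bounded-degree gadgets while controlling the optimum), or directly from the hardness constructions used for {\sc Edge Steiner Tree} on $\mathcal{H}$-free graphs in~\cite{BBJPPL21}; subcubicity is exactly the regime C2 asks for.

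For C3 I claim $k=1$ works, i.e.\ for every $p\geq 1$ the problem remains \NP-complete on the class $\mathcal{G}^{p}$ of $p$-subdivisions of subcubic graphs. Note first that $p$-subdividing a subcubic graph yields a subcubic graph, since each new vertex has degree $2$ and each old vertex keeps its degree. Given a subcubic instance $(G,T,k)$, reduce it to $(G^{p},T,k')$, keeping the same terminal set $T\subseteq V(G)$; in $G^{p}$ every edge $uv$ of $G$ has become a path on $p+1$ edges whose $p$ internal vertices are non-terminals. The key observation is that any inclusion-minimal Steiner tree $S'$ of $G^{p}$ for $T$ has all of its leaves in $T$, hence no internal subdivision vertex is a leaf of $S'$; consequently each subdivided path is either entirely contained in $S'$ (together with both of its endpoints) or meets $S'$ only in its endpoints. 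Contracting every subdivided path back to a single edge therefore turns $S'$ into a Steiner tree $S$ of $G$ for $T$ with $|E(S')| = (p+1)\,|E(S)|$ and $|V(S')| = |V(S)| + p\,(|V(S)|-1) = (p+1)\,|V(S)| - p$, and conversely expanding any Steiner tree of $G$ for $T$ produces a Steiner tree of $G^{p}$ with these same relations. Because $t \mapsto (p+1)t$ and $j \mapsto (p+1)j - p$ are strictly increasing, minimum Steiner trees correspond to minimum Steiner trees; so setting $k' = (p+1)k$ for {\sc Edge Steiner Tree} and $k' = (p+1)k - p$ for {\sc Node Steiner Tree} gives a polynomial-time reduction for each fixed $p$, establishing C3 with $k=1$.

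The step I expect to require the most care is the leaf/minimality argument for the node variant, since there the target parameter is not a pure scaling of $k$ but the affine function $(p+1)k - p$, and one must argue correctness through monotonicity of the optimum rather than a direct translation of the parameter; one must also confirm that a Steiner tree of $G^{p}$ cannot do better by entering a subdivided path from both ends without traversing the whole of it, which is exactly what the ``no non-terminal leaf'' observation rules out. The only other point needing a moment's attention is fixing the precise citation (or, if preferred, a short self-contained reduction) for \NP-completeness on subcubic graphs used for C2.
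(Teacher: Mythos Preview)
Your proof is correct and follows essentially the same route as the paper. The paper also dispatches C1 by citing bounded-treewidth tractability (\cite{ALS91}) and proves C3 via the same ``no non-terminal leaf in a minimal Steiner tree'' observation, though it states only the $1$-subdivision case ($k\leftrightarrow 2k$) rather than your cleaner direct treatment of the $p$-subdivision, and it handles the node version simply by invoking its equivalence to the edge version on unweighted graphs rather than via your affine formula $(p+1)k-p$. The one place where the paper is more explicit than you is C2: rather than citing subcubic hardness, it gives a self-contained reduction that replaces each vertex of a general instance by a rooted binary tree on $n$ leaves and each edge by a path of length $4n^{2}$ between such leaves, with target $k'=4n^{2}k+2n^{2}$. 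This is exactly the ``routine degree-reduction'' you allude to, so supplying those concrete details (or a comparable gadget) would make your C2 self-contained in the same way.
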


\begin{proof}  
As the two variants are equivalent (on unweighted graphs), we only consider {\sc Edge Steiner Tree}, which is linear-time solvable for graphs of bounded treewidth~\cite{ALS91} so satisfies C1.
For showing C2, we reduce from {\sc Edge Steiner Tree}, 
which is \NP-complete even for grid graphs~\cite{GJ77}, and thus for planar graphs.
 
Let $(G,T,k)$ be an instance, where $G$ is a planar graph with $|V(G)|=n$.
We build a planar subcubic graph $G'$ where we replace each node $v$ in $G$ with a rooted binary tree $T_v$ in which there are $n$ leaf vertices (so the tree contains at most $2n$ nodes and is of depth $\lceil \log n \rceil$).
  	For each edge $e=uv$ of $G$, add to $G'$ a path $e'$ of length $4n^2$ between some a leaf of $T_u$ and a leaf of $T_v$ (ensuring that each leaf is incident with at most one such path). If $v$ in $G$ is in $T$, then the root vertex of $T_v$ is a terminal in~$G'$ (and these are the only terminals in $G'$ and form the set $T'$).
We note that $G$ is planar subcubic, and we claim that $(G,T,k)$ is a yes-instance if and only if $(G',T',4n^2 \cdot k+2n^2)$ is a yes-instance.
  	
First suppose $G$ has a Steiner tree $S$ with at most $k$ edges. We build a Steiner tree $S'$ in $G'$: if $e=uv$ is in $S$, then we add to $S'$ a path that comprises $e'$ and paths that join the roots of~$T_u$ and $T_v$ to $e'$.  
The sum of the lengths of these paths, additional to the $4n^2 \cdot k$, is bounded above by $2 \cdot n \cdot \log n \leq 2n^2$.
  	
Now suppose $G'$ has a Steiner tree $S'$ with at most $4n^2 \cdot k+2n^2$ edges. 
 We build a tree $S$ in~$G$: if $e=uv$ and $e'$ is in $S'$, we add $e$ to $S$.  
Then~$S$ is a Steiner tree in~$G$.  As the length of a path from $T_u$ to $T_v$ is $4n^2$, the sum of the lengths of all such paths in $S'$ is a whole multiple of $4n^2$, so $|E(S)|\leq k$.
 
 To prove C3, it suffices to show the following claim:
 
 \medskip
 \noindent
 {\it Claim.} A graph $G$ has an edge Steiner tree for terminals $T$ of size at most $k$ if and only if the $1$-subdivision of $G$ has an edge Steiner tree for terminals $T$ of size at most $2k$.
 
 \medskip
 \noindent
 In order to see this, let $G'$ be the $1$-subdivision of $G$. 
  Let $e_1$ and $e_2$ be the two edges obtained from subdividing an edge $e\in E(G)$.
  Given a Steiner tree $S$ of $G$ with at most $k$ edges, we obtain a Steiner tree of $G'$ with at most $2k$ edges by replacing each edge $e$ of $S$ with $e_1$ and~$e_2$.  Given a Steiner tree $S'$ of $G'$ with at most $2k$ edges, we may assume that for any edge $e$ of $G$, either neither or both of $e_1$ and $e_2$ are in $S'$; if $S'$ contains only one it can safely be discarded.
To obtain a Steiner tree of $G$ with at most $k$ edges, include each edge $e$ if both $e_1$ and $e_2$ are in $S'$. \qed
  \end{proof}
  
 \noindent
 In the {\sc Edge Multiway Cut} problem, also known as {\sc Multiterminal Cut}, we are given an input graph $G=(V,E)$, a subset $T$ of its vertices, and an integer $k$. The goal is to decide whether there exists a set $S\subseteq E$ such that $|S| \leq k$ and for any pair of vertices $\{u,v\}\in T$, $G \setminus S$ does not contain a path between $u$ and $v$. In the {\sc Node Multiway Cut} problem, 
we ask for a set $S \subseteq V \setminus T$ such that $|S| \leq k$ and for any pair of vertices $\{u,v\}\in T$, $G \setminus S$ does not contain a path between $u$ and $v$.

\begin{theorem}
{\sc Edge} and {\sc Node Multiway Cut} are C123-problems.
\label{t-mwc}
\end{theorem}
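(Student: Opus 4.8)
The plan is to verify C1, C2 and C3 for both variants; C1 and C3 are routine, and the \NP-hardness condition C2 is where the work lies. For C1: for a fixed vertex set (resp.\ edge set) $S$, the property ``$G-S$ contains no path between $t_i$ and $t_j$'' is expressible in monadic second-order logic and ``$|S|\le k$'' is handled by the usual size-counting extension, so both problems are minimisation problems over an (E)MSO-definable predicate and hence linear-time solvable for graphs of bounded treewidth~\cite{ALS91} (equivalently, by the textbook dynamic program over a tree decomposition); so C1 holds.

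For C3 I claim $k=1$ suffices, because subdividing an edge does not change the optimum. Let $G'$ arise from $G$ by subdividing an edge $uv$ into $u$--$w$--$v$. For {\sc Edge Multiway Cut}: a multiway cut of $G$ of size $c$ gives one of size $c$ in $G'$ (replace $uv$ by $uw$ whenever it is used), and mapping every edge of a multiway cut $F'$ of $G'$ back to its original in $G$ gives a multiway cut of $G$ of size at most $|F'|$ (a terminal-separating path of $G$ avoiding the image lifts to one of $G'$ avoiding $F'$); hence the optimum is unchanged by subdividing one edge, and therefore by forming the $p$-subdivision for any $p\ge1$. Since subdivision preserves subcubicity, \NP-hardness for the class $\mathcal G$ of subcubic graphs transfers to $\mathcal G^{p}$ for every $p$, which is exactly C3 with $k=1$. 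For {\sc Node Multiway Cut} the same argument works after observing that an instance with two adjacent terminals is trivially negative, so the terminal set may be assumed independent; then a node multiway cut of $G'$ that deletes the subdivision vertex $w$ can be re-routed to a non-terminal endpoint of $uv$ without growing, again leaving the optimum unchanged.

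It remains to establish C2, \NP-completeness on subcubic graphs. The approach I would take is to start from the classical \NP-hardness of {\sc Edge Multiway Cut} (with three terminals, even on planar graphs), first pass to an unweighted instance (replace a weight-$w$ edge by $w$ internally disjoint paths of length two) and then to a subcubic one by replacing each remaining vertex $v$ of degree exceeding three by a bounded-degree gadget $G_v$ with $\deg(v)$ designated attachment points, to which the edges at $v$ are routed. The crucial design requirement is that every cut of $G_v$ that is cheap relative to the reduction's budget be useless---one can take $G_v$ to be a cubic graph of constant edge expansion, so that any cut splitting $G_v$ into two linear-sized parts is expensive and no optimal solution profits from cutting inside a gadget---and one then checks that the new optimum is a fixed linear function of the original one. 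Finally, the textbook polynomial reductions between the edge and node versions (replace each edge by a path of length two; split each non-terminal vertex into an in/out pair) can be carried out while keeping the graph subcubic after a trivial local fix, so C2 for one version yields C2 for the other. An alternative and possibly cleaner route is a direct reduction from {\sc Not-All-Equal $3$-Satisfiability} using subcubic variable- and clause-gadgets, which avoids degree reduction entirely. Either way, the main obstacle is exactly this degree reduction: in a subcubic graph the local edge-connectivity---and the number of edges meeting a single vertex---is at most three, so one cannot make an edge or a neighbourhood ``uncuttable'' by boosting weights or multiplicities, and the gadgets must be engineered so that the cheap cuts through them that do exist are either useless or already paid for; everything else is routine.
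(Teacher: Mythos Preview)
Your handling of C1 and C3 is correct and matches the paper essentially line for line: both variants are EMSO-expressible minimisation problems (so~\cite{ALS91} gives C1), and a single subdivision preserves the optimum in either variant, giving C3 with $k=1$. Your extra care for {\sc Node Multiway Cut} (preprocessing away adjacent terminals so that a subdivision vertex can always be swapped for a non-terminal endpoint) is slightly more explicit than the paper's one-line justification, but the argument is the same.

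The gap is C2. You correctly flag it as ``where the work lies'' and then do not actually do the work; what you provide is a plan, not a proof. The paper does not attempt any of the gadgetry you sketch: it simply cites~\cite{JMPPSvL22}, a separate companion paper by (a subset of) the same authors devoted precisely to showing that {\sc Edge Multiway Cut} and {\sc Node Multiway Cut} are \NP-complete on planar subcubic graphs. That this merited its own paper is a signal that the degree-reduction step is genuinely delicate, and your own diagnosis explains why: in a subcubic graph every vertex has edge-connectivity at most~$3$ to the rest, so an expander gadget $G_v$ \emph{cannot} be made ``uncuttable''---there is always a cut of size at most~$3$ isolating any single attachment point---and your requirement that ``every cut of $G_v$ that is cheap relative to the reduction's budget be useless'' is therefore not obviously satisfiable. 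The appeal to ``textbook polynomial reductions between the edge and node versions\ldots\ while keeping the graph subcubic after a trivial local fix'' is likewise optimistic: the standard node-to-edge reduction (vertex splitting) and the standard edge-to-node reduction (making original vertices undeletable) both rely on raising degrees or weights, which is exactly what subcubicity forbids. None of your sketched routes is known to be wrong, but none is completed either, and the paper's proof avoids the issue entirely by invoking an external hardness result.
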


\begin{proof}
{\sc Edge Multiway Cut} is linear-time solvable for graphs of bounded treewidth~\cite{DLZ13} (also following~\cite{ALS91}) and \NP-complete for planar subcubic graphs~\cite{JMPPSvL22} so satisfies C1 and C2. It satisfies C3 as well, as we will prove the following claim:

\medskip
\noindent
{\it Claim.} A graph $G$ has an edge multiway cut for a set of terminals $T$ of size at most $k$ if and only if the $1$-subdivision of $G$ has an edge multiway cut for $T$ of size at most $k$.

\medskip
\noindent
In order to see this, let $G'$ be the $1$-subdivision of $G$.
For each edge $e$ in $G$, there exist two edges in $G'$. If an edge of $G$ is in an edge multiway cut for $G$ and $T$, then it suffices to pick only one of the two edges created from it in $G'$ to disconnect the paths $e$ lies on. Vice versa, if an edge $e'$ of $G'$ is in an edge multiway cut for $G'$ and $T$, then it suffices to pick the unique corresponding edge in $G$ to disconnect the paths $e'$ lies on. 

\medskip
\noindent
We now turn to {\sc Node Multiway Cut}, which is linear-time solvable for graph classes of bounded treewidth~\cite{ALS91} (it is an extended monadic second-order linear extremum problem)
and \NP-complete for planar subcubic graphs~\cite{JMPPSvL22} so satisfies C1 and C2. It satisfies C3, as we will prove the following claim:

\medskip
\noindent
{\it Claim.} A graph $G$ has a node multiway cut for a set of terminals $T$ of size at most $k$ if and only if its $1$-subdivision has a node multiway cut for $T$ of size at most $k$. 

\medskip
\noindent
In order to see this, let $G'$ be the $1$-subdivision of $G$. We observe that subdividing any edge of a graph does not create new connections between terminals. Moreover, we can assume that none of the newly introduced vertices of the subdivision are used in some optimal solution for $G'$ and $T$. \qed
\end{proof}
  
\noindent 
Given a graph $G$ and disjoint vertex pairs $(s_1, t_1), (s_2,t_2), \ldots (s_k, t_k)$, the {\sc Disjoint Paths} problem is to decide if $G$ has $k$ pairwise vertex-disjoint paths from $s_i$ to $t_i$ for every $i$.
We obtain the {\sc Induced Disjoint Paths} problem if the paths are required to be mutually induced;
a set of paths $P^1,\ldots, P^k$
is {\it mutually induced}  if $P^1,\ldots, P^k$ are pairwise vertex-disjoint and there is no edge between a vertex of some $P^i$ and a vertex of some $P^j$ if $i\neq j$.

\begin{theorem}
{\sc Disjoint Paths} and {\sc Induced Disjoint Paths} are C123-problems.
\label{t-dp-idp}
\end{theorem}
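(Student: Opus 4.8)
The plan is to verify conditions C1, C2 and C3 for each of the two problems. As {\sc Disjoint Paths} and {\sc Induced Disjoint Paths} are not known to be polynomially equivalent, I would treat them separately, but I would make the induced variant piggyback on the ordinary one by means of a single subdivision lemma, rather than running an independent hardness reduction for it.

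For C1 I would appeal to the fact that both problems are solvable in linear (or at least polynomial) time on graphs of bounded treewidth, even with the number of requested paths part of the input. If a self-contained argument is wanted, a bottom-up dynamic program over a tree decomposition of width $w$ works: at a bag one records which of its at most $w+1$ vertices lie on a path, which of the (at most $w+1$ relevant) path indices each such vertex belongs to, and, for the induced version, which pairs among them are required to stay non-adjacent; the number of states depends on $w$ only.

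The heart of the matter is the following lemma, which I would state and prove first. For any graph $G$ with disjoint terminal pairs $(s_1,t_1),\dots,(s_k,t_k)$ and any integer $m\ge 1$: $G$ has $k$ pairwise vertex-disjoint $s_i$--$t_i$ paths iff the $m$-subdivision $G^m$ has $k$ pairwise vertex-disjoint $s_i$--$t_i$ paths iff $G^m$ has $k$ mutually induced $s_i$--$t_i$ paths. The first equivalence is routine: a path of $G$ lifts by routing through the length-$(m+1)$ gadget that replaces each edge it traverses, and conversely, since every internal subdivision vertex has degree $2$ and every terminal is an original vertex, a path of $G^m$ that meets a gadget must traverse it from end to end, so contracting each fully used gadget back to its edge yields $k$ disjoint paths in $G$. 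For the second equivalence only the forward direction needs work, and this is where subdivision pays off: a chord of one of the paths, or an edge between two of them, would be an edge of $G^m$ and hence would lie inside one gadget; but every edge of a gadget joins two consecutive vertices of any path that traverses that gadget (so there are no chords), and two distinct gadgets --- even ones sharing an original endpoint --- contribute only internally non-adjacent vertices (so there is no edge between distinct paths), using also that $G^m$ has no edge directly between two original vertices.

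Given the lemma, C3 for {\sc Disjoint Paths} is immediate, with the integer in condition C3 taken to be $1$: the $m$-subdivision of a subcubic graph is again subcubic, and by the first equivalence it is an equivalent instance, so hardness on the class ${\cal G}$ of subcubic graphs transfers verbatim to ${\cal G}^p$ for every $p$. For C2 of {\sc Disjoint Paths} I would invoke the known \NP-completeness of {\sc Disjoint Paths} on subcubic graphs (obtainable by a direct reduction from {\sc 3-Satisfiability}; note that the naive ``split a high-degree vertex'' trick does not apply to {\sc Disjoint Paths}, since a degree-reduction gadget would need a single-vertex bottleneck, which forces that vertex to have high degree again, so one genuinely needs such a reduction or a citation). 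For {\sc Induced Disjoint Paths} I would then not prove a fresh reduction at all: the lemma reduces {\sc Disjoint Paths} on ${\cal G}$ to {\sc Induced Disjoint Paths} on ${\cal G}^p$ for every $p\ge 1$; the case $p=1$ gives C2 (a $1$-subdivision of a subcubic graph is subcubic), and the case of arbitrary $p$ gives the conclusion required by C3 (again with the C3 integer equal to $1$). The step I expect to be the real obstacle is precisely this last point: for {\sc Induced Disjoint Paths}, edge subdivision is \emph{not} answer-preserving in the obvious way --- destroying an edge between two would-be paths can create a mutually induced path system with no counterpart in $G$ --- so the standard ``subdivide and we are done'' argument breaks in one direction; routing the reduction through the non-induced problem is what repairs it, because there the discrepancy is harmless: vertex-disjoint paths in a subdivision are automatically chordless and pairwise non-adjacent.
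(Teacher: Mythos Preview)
Your proposal is correct and follows essentially the same approach as the paper: cite bounded-treewidth algorithms for C1, cite the subcubic hardness of {\sc Disjoint Paths} for C2, and use the three-way equivalence ``DP on $G$ $\Leftrightarrow$ DP on the subdivision $\Leftrightarrow$ IDP on the subdivision'' to obtain both C2 for {\sc Induced Disjoint Paths} and C3 for both problems. The only difference is cosmetic: the paper states the equivalence for the $1$-subdivision of a subcubic graph and leaves the extension to $\mathcal{G}^{p}$ implicit, whereas you prove it directly for the $m$-subdivision of an arbitrary graph --- your version is slightly more explicit, but the key observation (degree-$2$ subdivision vertices force full gadget traversal, so vertex-disjoint paths in a subdivision are automatically mutually induced) is identical.
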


\begin{proof}
The {\sc Disjoint Paths} problem is linear-time solvable for graphs of bounded treewidth~\cite{Sc94} and \NP-complete for 
planar subcubic graphs~\cite{MP93} so satisfies C1 and C2.
The {\sc Induced Disjoint Paths} problem is solvable in polynomial time for graphs of bounded mim-width~\cite{JKT20} and thus for bounded treewidth~\cite{Vat12}, so it satisfies C1.
Let $G'$ be the $1$-subdivision of a subcubic graph $G$ and let ${\cal T}$ be a set of disjoint vertex pairs.
Then, $(G,{\cal T})$ is a yes-instance of {\sc Disjoint Paths} if and only if
 $(G',{\cal T})$ is a yes-instance of {\sc Disjoint Paths} if and only if $(G',{\cal T})$ is a yes-instance of {\sc Induced Disjoint Paths}. Hence, C2 is satisfied for {\sc Induced Disjoint Paths} as well and C3 is satisfied for both problems. \qed
\end{proof}

\noindent
The {\sc Long Path} and {\sc Long Induced Path} are to decide for a given graph $G$ and integer~$k$, whether $G$ contains $P_k$ as a subgraph or induced subgraph, respectively.
The {\sc Long Cycle} and {\sc Long Induced Cycle} problems are defined similarly.
By combining the next result with Theorem~\ref{t-dicho}, we recover the classification of~\cite{Ka12} for {\sc Long Path}. The classification of {\sc Long Cycle} was not made explicit in~\cite{ABKL07}, but is implicitly there (combine Proposition~1 of~\cite{ABKL07} with Lemma~12 of~\cite{ABKL07}).

\begin{theorem}
{\sc Long Path}, {\sc Long Induced Path}, {\sc Long Cycle} and {\sc Long Induced Cycle} are C123-problems.
\label{t-lp-lip-lcl-lic}
\end{theorem}

\begin{proof}
Bodlaender~\cite{Bo93} proved that {\sc Long Path} and {\sc Long Cycle} are polynomial-time solvable for graphs of bounded treewidth. Hence, {\sc Long Path} and {\sc Long Cycle} satisfy C1. As {\sc Hamilton Path} (so {\sc Long Path} with $k=|V(G)|$) and {\sc Hamilton Cycle} (so {\sc Long Cycle} with $k=|V(G)|$) are \NP-complete for subcubic planar graphs~\cite{GJS74}, {\sc Long Path} and {\sc Long Cycle} satisfy C2.

Let $G'$ be the $1$-subdivision of a subcubic graph $G$.
Now the following holds: $(G,k)$ is a yes-instance of {\sc Long Path} if and only if  $(G',2k)$ is a yes-instance of {\sc Long Path} if and only if  $(G',2k)$ is a yes-instance of {\sc Long Induced Path}.
Hence, C2 is satisfied for {\sc Long Induced Path} as well, and C3 is satisfied for both problems. Moreover, {\sc Long Induced Path} satisfies C1; it is even polynomial-time solvable for graphs of bounded mim-width~\cite{JKT20}. We can make the same observations for {\sc Long Cycle} and {\sc Long Induced Cycle}. \qed
\end{proof}

\section{Application to Polynomial-Time Solvable Problems}\label{s-poly}
We give two examples of polynomial-time solvable problems where Theorem~\ref{t-dicho} gives a distinction between almost-linear-time solvability versus not having a subquadratic-time algorithm (conditional under appropriate hardness hypotheses). 
Let $d(u,v)$ denote the distance between $u$ and $v$ in a graph~$G$. The {\it eccentricity} of $u\in V$ is $e(u) = \max_{v \in V} d(u, v)$. The {\it diameter} of~$G$ is the maximum eccentricity and the {\it radius} the minimum eccentricity. The {\sc Diameter} and {\sc Radius} problems are to find the diameter and radius, respectively, of a graph.
We need a lemma.
	
\begin{lemma}\label{DiameterSubdivision}
Let $G'$ be the $2$-subdivision of a graph $G$ with diameter~$d$.
Let $d'$ be the diameter of $G'$. Then $3d\leq d' \leq 3d+2$.
\end{lemma}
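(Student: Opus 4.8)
The plan is to prove both inequalities by relating a path in one graph to a path in the other. Let $G'$ be the $2$-subdivision of $G$, so each edge $uv\in E(G)$ is replaced by a path $u,w_u^{uv},w_v^{uv},v$ of length $3$ on two new internal vertices. Write $V(G)\subseteq V(G')$ for the ``original'' vertices.

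\emph{Lower bound $d'\ge 3d$.} First I would observe that if $P$ is any path in $G$ between original vertices $s$ and $t$, then replacing each edge of $P$ by the corresponding length-$3$ path gives a walk in $G'$ from $s$ to $t$ of length $3\,|E(P)|$; taking $P$ to be a shortest $G$-path gives $d_{G'}(s,t)\le 3\,d_G(s,t)$. Conversely, given any walk $Q$ in $G'$ between original vertices $s,t$, contracting the two internal vertices of each subdivided edge (equivalently, recording the sequence of original vertices visited) yields a walk in $G$ of length $|E(Q)|/3$ when $Q$ uses each subdivided edge ``cleanly'', and at least $\lceil |E(Q)|/3\rceil$ in general; this shows $d_G(s,t)\le \tfrac13 d_{G'}(s,t)$ for original $s,t$. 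Combining, $d_{G'}(s,t)=3\,d_G(s,t)$ for every pair of original vertices, and since $G$ has diameter $d$ there is a pair $s,t$ with $d_G(s,t)=d$, so $d'\ge d_{G'}(s,t)=3d$.

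\emph{Upper bound $d'\le 3d+2$.} Here I would bound $d_{G'}(x,y)$ for arbitrary $x,y\in V(G')$, including the new internal vertices. Each internal vertex $w$ of $G'$ lies on a length-$3$ path between two original vertices and is within distance $2$ (in fact within distance $1$ or $2$) of some original vertex $a_w\in V(G)$; indeed a $w_u^{uv}$-type vertex is at distance $1$ from $u$ and distance $2$ from $v$. So for any $x,y$, pick original vertices $a_x,a_y$ with $d_{G'}(x,a_x),d_{G'}(y,a_y)\le 2$, but chosen so that the ``side'' they sit on is favorable; a cleaner route is: $x$ is at distance $\le 1$ from \emph{some} original vertex $a_x$ (the near endpoint of its subdivided edge), and similarly $y$ from $a_y$. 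Then $d_{G'}(x,y)\le d_{G'}(x,a_x)+d_{G'}(a_x,a_y)+d_{G'}(a_y,y)\le 1+3d+1=3d+2$, using the first part for $d_{G'}(a_x,a_y)=3\,d_G(a_x,a_y)\le 3d$. Taking the maximum over $x,y$ gives $d'\le 3d+2$. One caveat to check: if $x$ and $y$ are both internal vertices of the \emph{same} subdivided edge the bound is trivially fine, and if $x$ itself is original we just take $a_x=x$ with distance $0$.

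The main obstacle is the bookkeeping in the upper bound: one must be careful that the additive slack from moving $x,y$ to original vertices is genuinely at most $1$ on each side (not $2$), since each internal vertex of $G'$ is adjacent to exactly one original vertex. Choosing $a_x$ to be that neighbor keeps each detour to length $1$, giving the stated $+2$ rather than a weaker $+4$. I would also note for completeness that if $G$ has no edges the statement is vacuous or trivial, and if $G$ is disconnected one interprets $d=d'=\infty$ consistently, so we may assume $G$ is connected with at least one edge.
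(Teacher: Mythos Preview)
Your argument is correct and follows essentially the same route as the paper: establish $d_{G'}(s,t)=3\,d_G(s,t)$ for original vertices (giving $d'\ge 3d$ via a diametral pair of $G$), then bound any $G'$-distance by routing each endpoint to its nearest original neighbour---each internal vertex of a $2$-subdivision is adjacent to exactly one original vertex---so the triangle inequality yields $d'\le 1+3d+1=3d+2$. One small slip to fix: in the converse step you write that the contracted walk in $G$ has length ``at least $\lceil |E(Q)|/3\rceil$'', but you need ``at most $\lfloor |E(Q)|/3\rfloor$'' to conclude $d_G(s,t)\le \tfrac{1}{3}\,d_{G'}(s,t)$; since a shortest $G'$-path between original vertices necessarily traverses each subdivided edge cleanly, the exact case already suffices and the argument goes through.
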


\begin{proof}
Under edge-subdivision, the shortest path between two original vertices does not change, it is only of longer length. As the path between two adjacent vertices in $G$ gets length~$3$ in $G'$, use any diametral pair in $G$ to find that $d'\geq 3d$.

Let $u$ and $v$ be two vertices of $V'$. If $u$ and $v$ belong to $G$, then they are of distance at most~$3d$ in $G'$.
If one of them, say $u$, belongs to $V$ and the other one, $v$, belongs to $V'\setminus V$, then they are of distance at most $3d+1$ in $G'$, as any vertex in $V'\setminus V$ is one step away from some vertex in $V$ and the diameter is $d$ in~$G$. If $u$ and $v$ both belong to $V'\setminus V$, then $u$ is adjacent to some vertex $w_u\in V$ and $v$ is adjacent to some vertex $w_v\in V$.
As the diameter is $d$ in $G$, vertices $w_u$ and $w_v$ lie at distance at most $3d$ from each other in $G'$. Hence, in this case, $d(u,v)\leq 3d+2$ in~$G'$. To summarize, the diameter of $G'$ is at most $3d + 2$. \qed
\end{proof}

\begin{theorem}
Both {\sc Diameter} and {\sc Radius} are C123-problems.
\label{t-d-r}
\end{theorem}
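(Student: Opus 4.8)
The plan is to verify that {\sc Diameter} and {\sc Radius} each satisfy C1, C2 and C3; the statement then follows, and combining it with Theorem~\ref{t-dicho} gives the announced dichotomy between almost-linear-time solvability and the absence of a subquadratic-time algorithm (the latter conditioned on the Orthogonal Vectors Conjecture for {\sc Diameter} and on the Hitting Set Conjecture for {\sc Radius}). Throughout, ``efficiently solvable'' means solvable in time $n^{1+o(1)}$ and ``computationally hard'' means having no $O(n^{2-\varepsilon})$-time algorithm for any $\varepsilon>0$ under the relevant conjecture. For C1 I would use the known fact that on a graph of treewidth~$k$ both the diameter and the radius can be computed in time $f(k)\cdot n^{1+o(1)}$, e.g.\ by a divide-and-conquer recursion over a tree decomposition (Abboud, Vassilevska Williams and Wang, and follow-up work); for a class of bounded treewidth this is $n^{1+o(1)}$, which is almost-linear, so C1 holds for both problems.

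For C2 I would invoke the fine-grained lower bounds for these problems on sparse, and in fact bounded-degree, graphs: under the Orthogonal Vectors Conjecture {\sc Diameter} has no $O(n^{2-\varepsilon})$-time algorithm, and under the Hitting Set Conjecture the same holds for {\sc Radius}, already on graphs of constant maximum degree (Evald and Dahlgaard). If the cited degree bound exceeds~$3$, one brings it down to subcubic by replacing each vertex of too-large degree with a bounded-depth gadget having one ``port'' per incident edge; since the starting degree is a constant, every relevant pairwise distance is stretched by the same constant factor up to an additive constant, the yes/no gap survives after rescaling the thresholds, and the instance grows only by a constant factor. A subquadratic algorithm on the subcubic instances would then be subquadratic on the bounded-degree instances, so C2 holds for both problems.

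For C3 I set $k=2$ and show that, for every $p\geq 1$, hardness on subcubic graphs transfers to the class ${\cal G}^{2p}$ of $2p$-subdivisions of subcubic graphs. The argument of Lemma~\ref{DiameterSubdivision} extends verbatim from the $2$-subdivision to the $m$-subdivision: if $G$ has diameter~$d$, its $m$-subdivision has diameter~$d'$ with $(m+1)d\leq d'\leq (m+1)d+m$, because shortest paths between original vertices scale by exactly $m+1$, while each endpoint of a geodesic in the subdivision contributes at most $\lfloor (m+1)/2\rfloor$ when it is an internal vertex. For $m=2p$ we have $m<m+1$, so the possible values of $d'$ for original diameter~$d$ and for diameter~$d+1$ lie in disjoint intervals; hence $d=\lfloor d'/(2p+1)\rfloor$ is recoverable from~$d'$. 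Since a subcubic $n$-vertex graph has $O(n)$ edges, its $2p$-subdivision has $O(n)$ vertices ($p$ being fixed), so an $O(n^{2-\varepsilon})$-time {\sc Diameter} algorithm on ${\cal G}^{2p}$ would yield one on subcubic graphs, contradicting C2; this gives C3 for {\sc Diameter}. For {\sc Radius} I would prove the analogous ``radius subdivision lemma'': the $2p$-subdivision of a radius-$r$ graph has radius~$r'$ with $(2p+1)r-p\leq r'\leq (2p+1)r+p$. The upper bound follows by bounding the eccentricity in the subdivision of (the image of) a centre of~$G$; the lower bound follows because in a radius-$r$ graph every vertex has eccentricity at least~$r$, so every vertex of the subdivision --- original or internal --- has eccentricity at least $(2p+1)r-p$ by routing towards an original vertex witnessing the eccentricity of the nearer endpoint of its edge. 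Again $2p<2p+1$, so the intervals for~$r$ and~$r+1$ are disjoint, $r$ is recoverable from~$r'$, and C3 for {\sc Radius} follows as for {\sc Diameter}.

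I expect the main obstacle to be pinning down C2 for \emph{subcubic} graphs specifically: a careless degree-reduction converts a ``distance at most~$a$ versus at least~$a+1$'' gap into ``at most~$a\cdot c$ versus at least~$a+1$'', which is vacuous, so one must use a gadget that stretches all the distances that matter by a common factor (or cite a hardness result stated directly for subcubic graphs). The choice $k=2$ in C3 is also not incidental: for the $1$-subdivision the bound becomes $2d\leq d'\leq 2d+2$, and $d'=2d+2$ is attained both by diameter~$d$ and by diameter~$d+1$, so the $1$-subdivision does not determine the original diameter --- which is precisely why the definition of C3 allows a problem-specific~$k$. The remaining parts are routine once Lemma~\ref{DiameterSubdivision} and its radius counterpart are in hand.
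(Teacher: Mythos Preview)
Your proposal is correct. For C1, C2, and C3 for {\sc Diameter}, you follow essentially the paper's route: the $n^{1+o(1)}$-time algorithm on bounded treewidth from~\cite{AWW16}, the subcubic lower bounds from~\cite{ED16}, and the subdivision lemma. Your explicit extension of Lemma~\ref{DiameterSubdivision} to $m$-subdivisions is a welcome clarification, since the paper only states the lemma for the $2$-subdivision and then asserts C3 without spelling out the general case. Your hedge on C2 is unnecessary: \cite{ED16} already gives the lower bounds on subcubic graphs directly, so no degree-reduction gadget is needed.

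The genuine difference is in C3 for {\sc Radius}. The paper does \emph{not} prove a radius analogue of Lemma~\ref{DiameterSubdivision}; instead it inspects the concrete reduction of~\cite{ED16} and observes that a constant subdivision of every edge can be absorbed by adjusting an internal parameter of that construction, so hardness persists on ${\cal G}^{kp}$. Your approach is purely structural: you prove $(2p{+}1)r-p\le r'\le(2p{+}1)r+p$ for the $2p$-subdivision and use disjointness of the resulting intervals to recover $r$ from $r'$. Your argument is self-contained and does not depend on the internals of~\cite{ED16}, and it would transfer unchanged to any future subcubic hardness result for {\sc Radius}; the paper's argument is shorter but reduction-specific. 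Both are valid, and your radius lemma is a natural companion to Lemma~\ref{DiameterSubdivision} that the paper could equally well have stated.
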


\begin{proof}
Both are solvable in $n^{1+o(1)}$ time for graphs of bounded treewidth~\cite{AWW16} and thus satisfy Condition C1.
Both also satisfy C2. Evald and Dahlgaard~\cite{ED16} proved that for subcubic graphs, no subquadratic algorithm exists for {\sc Diameter} exists under the Orthogonal Vectors Conjecture~\cite{Wi05}, and no subquadratic algorithm exists for {\sc Radius} exists under the Hitting Set Conjecture~\cite{AWW16}. 
From the construction in the proof of the latter result, we observe that any constant subdivision of all edges of the graph does not affect the correctness of the reduction, i.e., the parameter~$p$ in the construction can be increased appropriately to account for the subdivisions of the other edges. Hence, {\sc Radius} satisfies C3.
By Lemma~\ref{DiameterSubdivision}, {\sc Diameter} satisfies C3 as well. \qed
\end{proof}

\section{Limitations of our Framework} \label{s-limits}
We give two limitations of our framework.

\subsection{Forbidding An Infinite Number of Subgraphs}\label{s-formany}
We observe that in Theorems~\ref{t-planar} and~\ref{t-topo}, the set of graphs~${\cal H}$ is allowed to have infinite size.  However, the set of graphs~${\cal H}$ in Theorems~\ref{t-dicho} and~\ref{t-dicho4} cannot be allowed to have infinite size. 
This is because there exist infinite sets ${\cal H}$ such that

\begin{itemize}
\item [1.] ${\cal H}$ contains no graphs from $\cal{S}$.
\item  [2.] All C123-problems are efficiently solvable on $\cal{H}$-subgraph-free graphs.	
\end{itemize}

\noindent
To illustrate this, we give two examples. See, e.g.~\cite{Ka12}, for another example.

\medskip
\noindent
{\it Example 1.} Let ${\cal H}$ be the set of cycles ${\cal C}$. No graph from ${\cal C}$ belongs to ${\cal S}$.
Every ${\cal C}$-subgraph-free graph is a forest and thus has treewidth~$1$. Hence, every C123-problem is efficiently solvable on the class of ${\cal C}$-subgraph-free graphs (as it satisfies condition C1). 

\medskip
\noindent
{\it Example 2.} Let ${\cal H}=\{\mathbb{H}_1,\mathbb{H}_2,\ldots \}$; see also Fig.~\ref{f-st}.
No graph from ${\cal H}$ belongs to ${\cal S}$.
Every ${\cal H}$-subgraph-free graph~$G$ is $\mathbb{H}_1$-minor-free. By Theorem~\ref{t-bi}, $G$ has path-width, and thus treewidth, at most~$4$. Hence, every C123-problem is efficiently solvable on the class of ${\cal H}$-subgraph-free graphs. 

\subsection{Relaxing Condition C3}\label{s-c3}

In C3 we require the class ${\cal G}$ to be subcubic. In this way we are able to show in Theorem~\ref{t-cd} that every C123-problem~$\Pi$ satisfies condition~D, that is, for every $i\geq 3$, $\Pi$ is computationally hard for  
the class of $(C_3,\ldots,C_{\ell},K_{1,4},\mathbb{H}_1,\ldots,\mathbb{H}_\ell)$-subgraph-free graphs.\footnote{The aforementioned papers~\cite{FHL21,Mu17} show a number of problems to be \NP-complete for planar subcubic graphs of high girth, whereas we consider subcubic graphs of high girth that, instead of being planar, do not contain any small subdivided ``H''-graph as a subgraph.}
If we allow ${\cal G}$ to be any graph class instead of requiring ${\cal G}$ to be subcubic, then we can no longer show this, and hence the proof of Theorem~\ref{t-dicho} no longer holds in that case. That is, following the same arguments we can only construct a graph class that due to C2, is either $K_{1,4}$-subgraph-free (or equivalently, subcubic) or, due to C3, is $(C_3,\ldots,C_{\ell},\mathbb{H}_1,\ldots,\mathbb{H}_\ell)$-subgraph-free. Consequently, in that case, we can only obtain the dichotomy for ${\cal H}$-subgraph-free graphs if $|{\cal H}|=1$. 
This relaxation could potentially lead to a classification of more problems. However, so far, we have not identified any problems that belong to this relaxation but not to our original framework.

We also note that the existence of a problem-specific integer~$k\geq 1$ in C3 is needed. For instance, a $1$-division is bipartite and some computationally hard problems, such as {\sc Independent Set}, become efficiently solvable on bipartite graphs. In the proofs in Section~\ref{s-npc}, $k$ takes on values $1$, $2$, $3$ and $4$.

\section{Comparison between the Three Frameworks} \label{s-compare}
In this section, we provide an extensive discussion and comparison of the three frameworks in this paper: Theorem~\ref{t-planar}, \ref{t-topo}, and~\ref{t-dicho}. See also Table~\ref{t-thetable}.

\medskip
\noindent
{\bf Belonging to All Three Frameworks.}
Apart from {\sc Max-Cut} and possibly {\sc Tree-Width}, all C123-problems from Section~\ref{s-npc} are \NP-complete for planar subcubic graphs, and thus also satisfy the conditions of Theorems~\ref{t-planar} and~\ref{t-topo}.
In the proofs of Section~\ref{s-npc} we made explicit observations about this. The complexity of {\sc Tree-Width} is still open for planar graphs and planar subcubic graphs. 
It is also still open whether {\sc Diameter} and {\sc Radius} allow a distinction between almost-linear-time solvability versus not having a subquadratic-time algorithm on planar and subcubic planar graphs.

\medskip
\noindent
{\bf Not Falling Under Any of the Three Frameworks.}
Every problem that is \NP-complete for graphs of bounded treewidth does not satisfy any of the frameworks. An example is the aforementioned {\sc Subgraph Isomorphism} problem, which is \NP-complete even for input pairs $(G_1,G_2)$ that are linear forests (see, for example,~\cite{BHKKOO20} for a proof) and thus have tree-width~$1$. 
As another example, the {\sc Steiner Forest} problem is to decide for a given integer~$k$, graph $G$ and set of pairs of terminal vertices $S=\{(s_1,t_1),\ldots,(s_p,t_p)\}$, if $G$ has a subforest $F$ with at most $k$ edges,  such that $s_i$ and $t_i$, for every $i\in \{1,\ldots,p\}$, belong to the same connected component of $F$. It is readily seen that {\sc Steiner Forest} generalizes {\sc Edge Steiner Tree}: take all pairs of vertices of $T$ as terminal pairs to obtain an equivalent instance of {\sc Steiner Forest}.
Hence, {\sc Steiner Forest} is \NP-complete on planar subcubic graphs and this is maintained under subdivision, due to Theorem~\ref{EST:C123}. As {\sc Steiner Forest} is \NP-complete on graphs of treewidth~$3$~\cite{BHM11}, {\sc Steiner Forest} does not belong to any of the three frameworks. We refer to~\cite{BJMOPPSV} for a partial complexity classification of {\sc Steiner Forest} on $H$-subgraph-free graphs.

As an example on the other extreme end,  the {\sc Clique} problem does not fall under the subgraph framework either. This is because {\sc Clique} is polynomial-time solvable for ${\cal H}$-subgraph-free graphs for every set of graphs ${\cal H}$. We also note that {\sc Clique} is polynomial-time solvable for planar graphs (as every clique in a planar graph has size at most~$4$). Hence, it does not belong under the minor and topological minor frameworks either.

\medskip
\noindent
{\bf Only Belonging to the Minor Framework.}
We observe that every problem that satisfies the conditions of Theorem~\ref{t-topo} also satisfies the conditions of Theorem~\ref{t-planar}. However, there exist problems that satisfy the conditions of Theorem~\ref{t-planar} but not those of Theorems~\ref{t-topo} and~\ref{t-dicho}. For example, {\sc $3$-Colouring} satisfies C1 (this even holds for {\sc List Colouring}~\cite{JS97}), and it is \NP-complete even for $4$-regular planar graphs~\cite{Da80}. However, {\sc $3$-Colouring} does not satisfy the conditions of Theorems~\ref{t-topo} and~\ref{t-dicho}, as {\sc $3$-Colouring} is 
polynomial-time solvable for subcubic graphs due to Brooks' Theorem~\cite{Br41}. 

We can also take the problems {\sc Connected Vertex Cover}, {\sc Feedback Vertex Set} and {\sc Independent Feedback Vertex Set}. It is known that all three problems satisfy C1~\cite{ALS91}. Moreover,  {\sc Connected Vertex Cover}~\cite{GJ77} and {\sc Feedback Vertex Set}~\cite{Sp83} are \NP-complete for 
planar graphs of maximum degree at most~$4$. By taking $1$-subdivisions, the same holds for {\sc Independent Feedback Vertex Set}.
However, unlike the related problems {\sc Vertex Cover} and {\sc Odd Cycle Transversal}, the three problems do not satisfy the conditions of Theorems~\ref{t-topo} and~\ref{t-dicho}. This is because {\sc Connected Vertex Cover}~\cite{UKG88}, {\sc Feedback Vertex Set}~\cite{UKG88} and {\sc Independent Feedback Vertex Set}~\cite{JMPPSV} are polynomial-time solvable for subcubic graphs. Munaro~\cite{Mu17} showed that even {\sc Weighted Feedback Vertex Set} is polynomial-time solvable for subcubic graphs. Finally, the same holds for {\sc Matching Cut}, which satisfies C1~\cite{Bo09} and is \NP-complete for planar graphs of girth~$5$~\cite{Bo09} but polynomial-time solvable for subcubic graphs~\cite{Ch84}.

\medskip
\noindent
{\bf Only Belonging to the Minor and Topological Minor Frameworks.}
We also know of problems that satisfy the conditions of Theorem~\ref{t-topo} (and thus of Theorem~\ref{t-planar}) but not those of Theorem~\ref{t-dicho}. For example, {\sc Hamilton Cycle} is solvable in polynomial-time for graphs of bounded treewidth~\cite{AP89}, so satisfies C1, and it is \NP-complete for planar subcubic graphs~\cite{GJT76} (even if they are also bipartite and have arbitrarily large girth~\cite{Mu17}). Hence, {\sc Hamilton Cycle} satisfies the conditions of Theorem~\ref{t-topo}, and also satisfies C2. However, unlike its generalization {\sc Long Cycle}, which is C123, {\sc Hamilton Cycle} does not satisfy C3~\cite{MPPSV}, so it is not a C123-problem. The same holds for {\sc Hamilton Path} (which contrasts the C123-property of {\sc Long Path}).

To give another example, {\sc Star $3$-Colouring} is to decide if a graph~$G$ has a $3$-colouring such that the union of every two colour classes  induces a {\it star forest} (forest in which each connected component is a star). 
This problem is known to be \NP-complete even for subcubic planar subgraphs of arbitrarily large fixed girth~\cite{BJMOPS20}, but does not satsify C3~\cite{MPPSV}, so is not C123.

To give a final example of a problem that satisfies the conditions of Theorems~\ref{t-planar} and~\ref{t-topo} but not those of Theorem~\ref{t-dicho}, we can consider the $C_5$-{\sc Colouring} problem. This problem is to decide if a given graph allows a homomorphism to $C_5$. It is known to be \NP-complete on both subcubic graphs~\cite{GHN00} and planar graphs~\cite{MS09}. 
In order to show \NP-completeness for subcubic planar graphs, one can take the gadget of MacGillivray and Siggers~\cite{MS09} and augment it with a degree reduction gadget. As explained in Appendix~\ref{a-c5}, where we give a full proof, a suitable gadget appears in the arXiv version of~\cite{CHRSZ19}.\footnote{The use of this gadget for this purpose was proposed to us by Mark Siggers.} However, $C_5$-{\sc Colouring} does not satisfy C3~\cite{MPPSV}, so it not C123.

\medskip
\noindent
{\bf Only Belonging to the Subgraph Framework.}
Finally, there exist problems that satisfy the conditions of Theorem~\ref{t-dicho}, and thus are C123, but that do not satisfy the conditions of Theorems~\ref{t-planar} and~\ref{t-topo}. Namely,
{\sc Max-Cut} is polynomial-time solvable for planar graphs~\cite{Ha75} (and thus also for planar subcubic graphs). However, we show in Section~\ref{s-npc} that {\sc Max-Cut} satisfies the conditions of Theorem~\ref{t-dicho}, that is, is a C123-problem.

\section{Conclusions}\label{s-con}

By giving a meta-classification, we were able to unify a number of known results from the literature and give new complexity classifications for a variety of graph problems on classes of graphs characterized by a finite set~${\cal H}$ of forbidden subgraphs. Similar frameworks existed (even for infinite sets~${\cal H}$) already for the minor and topological minor relations, whereas for the subgraph relation, only some classifications for specific problems existed~\cite{AK92,GP14,Ka12}. 
 We showed that many problems belong to all three frameworks, and also that there exist problems that belong to one framework but not to (some of) the others. 

In order to have stronger hardness results for our subgraph framework, we considered the unweighted versions of these problems. However, we note that most of the vertex-weighted and edge-weighted variants of these problems satisfy C1 as well; see~\cite{ALS91}. We finish this section by setting out some directions for future work.

\subsection{Refining and Extending the Subgraph Framework}

We describe three approaches for refining or extending the subgraph framework. First, in the proof of  Theorem~\ref{t-cd} we gave an example of a C1'D-problem, namely {\sc ${\cal B}$-Modified List Colouring}, that is not C123.
However, this example is rather artificial. To increase our understanding of the conditions C1--C3 of our framework,  addressing the following question would be helpful.

\begin{open}\label{o-natural}
Do there exist any natural graph C1'D-problems that are not C123-problems?
\end{open}

\noindent
As a second approach, we recall from Section~\ref{s-c3} that we cannot relax condition~C3 by allowing the class ${\cal G}$ to be an arbitrary graph class instead of being subcubic. If we do this nevertheless, we are only able to obtain a dichotomy for ${\cal H}$-subgraph-free graphs if $|{\cal H}|=1$. This relaxation could potentially lead to a classification of more problems and we pose the following open problem.

\begin{open}
Can we classify more problems for $H$-subgraph-free graphs by no longer demanding that the class~${\cal G}$ in C3 is subcubic?
\end{open}

\noindent
So far, we have not identified any problems that belong to the relaxation but not to our original framework.

Recall that the set of forbidden graphs ${\cal H}$ is allowed to have infinite size in Theorems~\ref{t-planar} and~\ref{t-topo}.
For any infinite set of graphs ${\cal H}$, a C123-problem on ${\cal H}$-subgraph-free graphs is still efficiently solvable if ${\cal H}$ contains a graph~$H$ from ${\cal S}$. However, a C123-problem may no longer be computationally hard for ${\cal H}$-subgraph-free graphs if ${\cal H}$ has infinite size, as shown in Section~\ref{s-formany} with some examples. Hence, as a third approach for extending the subgraph framework, we propose the following problem. This problem was also posed by Kami\'nski~\cite{Ka12}, namely for the C123-problem {\sc Max-Cut}.

\begin{open}\label{o-infinite}
Can we obtain dichotomies for C123-problems restricted to ${\cal H}$-subgraph-free graphs when~${\cal H}$ is allowed to have infinite size?
\end{open}

\noindent
In order to solve Open Problem~\ref{o-infinite}, we need a better understanding of the treewidth of ${\cal H}$-subgraph-free graphs when ${\cal H}$ has infinite size. In recent years, such a study has been initiated for the induced subgraph relation; see, for example,~\cite{AACHS,ACV22,Ku,We19} for many involved results in this direction.

\subsection{Finding More Problems Falling under the Three Frameworks}

\medskip
\noindent
There still exist many natural problems for which it is unknown whether they belong to the minor, topological minor or subgraph framework. For the first two frameworks, we recall the following open problems, which have been frequently stated as open problems before.

\begin{open}
Determine the computational complexity of {\sc Tree-Width} for planar graphs and for planar subcubic graphs.
\end{open}

\begin{open}
Determine the fine-grained complexity of {\sc Diameter} and {\sc Radius} for planar graphs and for planar subcubic graphs.
\end{open}

\noindent
We now turn to the subgraph framework. We showed that {\sc Tree-Width} and {\sc Path-Width} are C123, but further investigation might reveal more such problems that fit the subgraph framework.

\begin{open}
Do there exist other width parameters with the property that the problem of computing them is C123?
\end{open}

\noindent
We also made a detailed comparison between the minor, topological minor and subgraph frameworks (see Section~\ref{s-compare}). To increase our general understanding of the complexity of graph problems, it would be interesting to find more problems that either belong to all frameworks or just to one or two. In particular, we are not aware of any problem that belongs to the minor an subgraph frameworks, but not to the topological minor framework. Such a problem (if it exists) must be computationally hard for planar graphs and subcubic graphs, but efficiently solvable for subcubic planar graphs.

\subsection{Dropping One of the Conditions C1, C2, or C3}

Another highly interesting direction is to investigate if we can obtain new complexity dichotomies for computationally hard graph problems that do not satisfy one of the conditions, C1, C2 or~C3. We call such problems C23, C13, or C12, respectively. 

Some interesting progress has recently been made on such problems (see e.g.~\cite{BJMOPPSV,JMPPSV,MPPSV}).
However, we note that in general, obtaining complete classifications is challenging for C12-, C13- and C23-problems. In particular, we need a better understanding of the structure of $P_r$-subgraph-free graphs and $\mathbb{H}_i$-subgraph-free graphs (recall that $\mathbb{H}_i$ is a subdivided ``H''-graph). Recall that a graph is $P_r$-subgraph-free if and only if it is $P_r$-(topological)-minor-free. Hence, if a problem is open for the case where $H=P_r$ for one of the frameworks, then it is open for all three of them.

To illustrate the challenges with an example from the literature, consider the aforementioned {\sc Subgraph Isomorphism} problem. This problem takes as input two graphs $G_1$ and $G_2$. Hence, it does not immediately fit in our framework, but one could view it as a C23-problem. The question is whether $G_1$ is a subgraph of~$G_2$. Recall that the {\sc Subgraph Isomorphism} problem is \NP-complete even for input pairs $(G_1,G_2)$ that are linear forests and thus even have path-width~$1$. Yet, even a classification for $H$-subgraph-free graphs was not straightforward; recall that Bodlaender et al.~\cite{BHKKOO20} settled the computational complexity of {\sc Subgraph Isomorphism} for $H$-subgraph-free graphs except if  $H=P_5$ or $H=2P_5$. These cases are open for the minor and topological minor frameworks as well due to the above observation (which also holds for linear forests).

\subsection{The Induced Subgraph Relation}\label{s-induced}

We finish our paper with some remarks on the induced subgraph relation. As mentioned, there exist ongoing and extensive studies on boundary graph classes (cf.~\cite{Al03,ABKL07,KLMT11,Mu17}) and treewidth classifications (cf.~\cite{AACHS,ACV22,Ku,We19}) in the literature. We note that for the induced subgraph relation, it is also useful to check C2 and C3. Namely, let $\Pi$ be a problem satisfying C2 and C3. For any finite set of graphs ${\cal H}$, the problem $\Pi$ on ${\cal H}$-free graphs is computationally hard if ${\cal H}$ contains no graph from ${\cal S}$. This follows from the same arguments as in the proof of Theorem~\ref{t-dicho4}.\footnote{The reason is that for any integer~$k$ and a sufficiently large integer~$\ell$, the class of subcubic $(C_3,\ldots,C_\ell,\mathbb{H}_1,\ldots,\mathbb{H}_k)$-free graphs coincides with the class of subcubic $(C_3,\ldots,C_\ell,\mathbb{H}_1,\ldots,\mathbb{H}_k)$-subgraph-free graphs.} Hence, if we aim to classify the computational complexity of problems satisfying C2 and C3 for $H$-free graphs (which include all C123-problems), then we may assume that $H\in {\cal S}$. For many of such problems, such as {\sc Independent Set}, this already leads to challenging open cases.

As mentioned, we currently do not know even any algorithmic meta-theorem for the induced subgraph relation, not even for a single forbidden graph~$H$.  However, a recent result of Lozin and Razgon~\cite{LR22} provides at least an initial starting point. To explain their result, the {\it line graph} of a graph $G$ has vertex set~$E(G)$ and an edge between two vertices $e_1$ and $e_2$ if and only if  $e_1$ and~$e_2$ share an end-vertex in $G$. Let ${\cal T}$ be the class of line graphs of graphs of ${\cal S}$. Lozin and Razgon~\cite{LR22} showed that for any finite set of graphs~${\cal H}$, the class of ${\cal H}$-free graphs has bounded treewidth if and only if ${\cal H}$ contains a complete graph, a complete bipartite graph, a graph from~${\cal S}$ and a graph from ${\cal T}$.
Their characterization leads to the following theorem, which could be viewed as a first meta-classification for the induced subgraph relation.

\begin{theorem}\label{t-induced} 
Let $\Pi$ be a problem that is \NP-complete on every graph class of unbounded treewidth, but polynomial-time solvable for every graph class of bounded treewidth.
For every finite set of graphs~${\cal H}$, the problem~$\Pi$ on ${\cal H}$-free graphs is polynomial-time solvable if ${\cal H}$ contains a  complete graph, a complete bipartite graph, a graph from~${\cal S}$ and a graph from ${\cal T}$, and it is \NP-complete otherwise.
\end{theorem}

\noindent
Note that by the aforementioned result of Hickingbotham~\cite{Hi22}, we may replace ``treewidth'' by ``path-width'' in Theorem~\ref{t-induced}.
However, currently, we know of only one problem that satisfies the conditions of Theorem~\ref{t-induced}, namely {\sc Weighted Edge Steiner Tree}~\cite{BBJPPL21},  where we allow the edges to have weights. As we showed, even  {\sc Edge Steiner Tree} (the unweighted version) is a C123-problem. Even though the conditions of Theorem~\ref{t-induced} are very restrictive, we believe the following open problem is still interesting.

\begin{open}
Determine other graph problems that satisfy the conditions of Theorem~\ref{t-induced}.
\end{open}

\smallskip
\noindent
{\it Acknowledgments.}
The authors wish to thank Hans Bodlaender, Daniel Lokshtanov, Vadim Lozin and Mark Siggers for helpful suggestions.


\appendix

\section{The Proof of Theorems~\ref{t-planar} and~\ref{t-topo}}\label{a-planar}

Both Theorems~\ref{t-planar} and~\ref{t-topo} follow immediately from the following classical result of Robertson and Seymour.

\begin{theorem}[\cite{RS86}]\label{t-rs86}
For every planar graph $H$, all $H$-minor-free graphs have tree-width at most $c_H$ for some constant~$c_H$ that only depends on the size of $H$.
\end{theorem}

\noindent
Here is the (known) proof of Theorem~\ref{t-planar}.

\medskip
\noindent
{\bf Theorem~\ref{t-planar} (restated).}
{\it  Let $\Pi$ be a problem that is computationally hard on planar graphs, but efficiently solvable for every graph class of bounded treewidth. For any set of graphs~${\cal H}$, the problem~$\Pi$ on ${\cal H}$-minor-free graphs is efficiently solvable if ${\cal H}$ contains a planar graph (or equivalently, if the class of ${\cal H}$-minor-free graphs has bounded treewidth) and is computationally hard otherwise.}

\begin{proof}
For some integer~$p\geq 1$, let ${\cal H}$ be a set of graphs, where we allow ${\cal H}$ to have infinite size. 
First assume that ${\cal H}$ contains a planar graph $H_1$. 
As $G$ is ${\cal H}$-minor-free, $G$ is $H_1$-minor-free.
We now apply Theorem~\ref{t-rs86} to find that
$G$ has treewidth bounded by some integer $c_{H_1}$, which is a constant as $H_1$ is a fixed graph. We conclude that the class of ${\cal H}$-minor-free graphs has bounded treewidth. Hence, by our assumption on $\Pi$, we can solve $\Pi$ efficiently for the class of ${\cal H}$-minor-free graphs.

Now assume that ${\cal H}$ contains no planar graph. As planar graphs are closed under taking vertex deletions, edge deletions and edge contractions, they are closed under taking minors.
This means that the class of planar graphs is a subclass of the class of ${\cal H}$-minor-free graphs. Hence, by our assumption on $\Pi$, we find that $\Pi$ is computationally hard for the class of ${\cal H}$-minor-free graphs.

It is well known that for a set of graphs ${\cal H}$, a class of ${\cal H}$-minor-free graphs has bounded treewidth if and only if ${\cal H}$ contains a planar graph.
These facts follow directly from results of Robertson and Seymour~\cite{RS84} (see e.g.~\cite{BL02,DP16}, where this is explained with respect to the more general parameter clique-width).
\qed
\end{proof}

\noindent
Here is the (known) proof of Theorem~\ref{t-topo}.

\medskip
\noindent
{\bf Theorem~\ref{t-topo} (restated).}
{\it Let $\Pi$ be a problem that is computationally hard on planar subcubic graphs, but efficiently solvable for every graph class of bounded treewidth. For any set of graphs~${\cal H}$, the problem~$\Pi$ on ${\cal H}$-topological-minor-free graphs is efficiently solvable if ${\cal H}$ contains a planar subcubic graph (or equivalently, if the class of ${\cal H}$-topological-minor-free graphs has bounded treewidth) and is computationally hard otherwise.}

\begin{proof}
For some integer~$p\geq 1$, let ${\cal H}$ be a set of graphs, where we allow ${\cal H}$ to have infinite size. 
First assume that ${\cal H}$ contains a planar subcubic graph $H_1$. 
As $G$ is ${\cal H}$-topological-minor-free, $G$ is $H_1$-topological-minor-free. 
As $H_1$ is subcubic, this means that $G$ is even $H_1$-minor-free.
We now apply Theorem~\ref{t-rs86} to find that
$G$ has treewidth bounded by some integer $c_{H_1}$, which is a constant as $H_1$ is a fixed graph. We conclude that the class of ${\cal H}$-subgraph-free graphs has bounded treewidth. Hence, by our assumption on $\Pi$, we can solve $\Pi$ efficiently for the class of ${\cal H}$-topological-minor-free graphs.

Now assume that ${\cal H}$ contains no planar subcubic graph. As planar subcubic graphs are closed under taking vertex deletions, vertex dissolutions and edge deletions, they are closed under taking topological minors.
This means that the class of planar subcubic graphs is a subclass of the class of ${\cal H}$-topological-minor-free graphs. Hence, by our assumption on $\Pi$, we find that $\Pi$ is computationally hard for the class of ${\cal H}$-topological-minor-free graphs.

It is well known that for a set of graphs ${\cal H}$, a class of ${\cal H}$-topological-minor-free graphs has bounded treewidth if and only if ${\cal H}$ contains a planar subcubic graph.
These facts follow directly from results of Robertson and Seymour~\cite{RS84} (see e.g.~\cite{BL02,DP16}, where this is explained with respect to the more general parameter clique-width).
\qed
\end{proof}

\section{Hardness of $\mathbf{C_5}$-Colouring for Subcubic Planar Graphs}\label{a-c5}

\begin{figure}[!]
\centering
\includegraphics[width=0.25\textwidth]{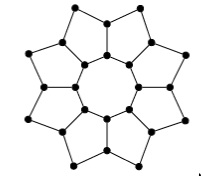}
\caption{Degree reduction gadget from Theorem~\ref{thm:C5-cubcubic-planar} with $d=8$.}
\label{fig:C5-cubcubic-planar}
\end{figure} 

We show the following result.

\begin{theorem}
{\sc $C_5$-Colouring} is \NP-complete on subcubic planar graphs.
\label{thm:C5-cubcubic-planar}
\end{theorem}
\begin{proof}
It is known from \cite{MS09} that {\sc $C_5$-Colouring} is \NP-complete on planar graphs. Let us introduce a degree reduction gadget communicated to us by Mark Siggers (a similar one appears in the proof of Theorem 4.3 from the arXiv version of~\cite{CHRSZ19}). The gadget in question is a collection of (some even number) $d$ copies of $C_5$, joined to one another in sequence by a single overlapping edge, such that the last is joined to the first to form a cycle. The resulting object appears like a flower and is drawn in Figure~\ref{fig:C5-cubcubic-planar} for the case $d=8$. In any homomorphism from this gadget to $C_5$, the outermost $d$ vertices must all be mapped to the same vertex of $C_5$. Thus, we may replace each vertex of degree $d>3$ with such a gadget, using once each these outermost vertices of the gadget as a surrogate for the original vertex. \qed
\end{proof}

\end{document}